\documentclass[10pt,reqno]{amsart}
\usepackage[scale=0.75, centering, headheight=14pt]{geometry}

\usepackage{mathtools}

\usepackage{graphicx}
\usepackage{fancyhdr}
\usepackage{amssymb,amsmath,enumerate}
\usepackage{caption}
\usepackage{float}
\usepackage{subcaption}
\usepackage{comment,color}
\usepackage{placeins}

\usepackage{amsthm}
\usepackage{thmtools}
\usepackage{bbm}
\newcounter{assump}
\renewcommand{\theassump}{A\arabic{assump}}

\usepackage[colorlinks=true, linkcolor=black, citecolor=black, urlcolor=black]{hyperref}
\usepackage[nameinlink, noabbrev]{cleveref}

\DeclareMathOperator*{\argmin}{arg\,min}
\DeclareMathOperator*{\essinf}{ess\,inf}
\DeclareMathOperator*{\cof}{cof}

\newtheorem{theorem}{Theorem}[section]
\newtheorem{lemma}[theorem]{Lemma}

\newtheorem{remark}[theorem]{Remark}

\newcommand{\norm}[1]{\lVert#1\rVert}

\newcommand{\PP}{\mathbf{P}}
\newcommand{\XX}{\mathbf{X}}
\newcommand{\YY}{\mathbf{Y}}

\newcommand{\NN}{\mathbb{N}}

\newcommand{\dd}{\mathrm{d}}
\newcommand{\eps}{\varepsilon}

\begin{document}
\title{Convergence of a least-squares splitting method for the Monge-Amp\`ere equation}

\author[A. Peruso]{Anna Peruso}

\address{
Institute of Mathematics, \'Ecole Polytechnique F\'ed\'erale de Lausanne, 
1015 Lausanne, Switzerland, 
and
Geneva School of Business Administration (HEG-GEN\`EVE), 
University of Applied Sciences and Arts Western Switzerland (HES-SO), 
1227 Carouge, Switzerland, 
}
\email{anna.peruso@epfl.ch, anna.peruso@hesge.ch}

\author[M. Sorella]{Massimo Sorella}
\address{Department of Mathematics, Imperial College London}
\email{m.sorella@imperial.ac.uk}

\subjclass[2020]{ 35Q, 65M12, 65M15 }

\keywords{Monge Amp\'ere equation, least-squares method, alternating projections}

\date{\today}

\maketitle

\begin{abstract}

We study the theoretical convergence of the nonlinear least-squares splitting method for the Monge-Amp\`ere equation introduced in \cite{dean} in which each iteration decouples the pointwise nonlinearity from the differential operator and consists of a local nonlinear update followed by the solution of two sequential Poisson-type elliptic problems. While the method performs well in computations \cite{caboussat,peruso}, a rigorous convergence theory has remained unavailable.

We observe that the iteration admits a reformulation as an alternating-projection scheme on Sobolev spaces $H^m$, $m\ge 0$. At a solution, the G\^ateaux differentials of the projection maps are the linear projections onto the corresponding tangent spaces. We prove that these tangent spaces are transverse, and hence the linearization of the alternating-projection map is a  contraction by classical Hilbert-space theory for alternating projections.
Building on this geometric characterization, we prove  linear convergence in $H^2$ of the splitting method on the two-dimensional torus $\mathbb{T}^2$ for initial data sufficiently close to a  solution $u\in H^4$. To the best of our knowledge, this yields the first rigorous convergence result for this splitting method in the periodic setting and provides a functional-analytic explanation for its observed numerical robustness.

\end{abstract}

\section{Introduction}
Fully nonlinear, second-order partial differential equations  appear across the sciences and engineering, but both their analysis and numerical approximation remain challenging. While Galerkin-type methods adapt naturally to semilinear and many quasilinear problems, they do not directly extend to fully nonlinear equations: integration by parts cannot be used to transfer derivatives onto test functions and there is no weak formulation in standard Sobolev spaces. Nevertheless, over the past two decades a variety of numerical approaches have been proposed to approximate smooth solutions of such problems. Much of this development has been driven by the Monge-Amp\`ere equation,
\begin{equation*}
\det D^2 u(x) = f(x,u,\nabla u), \qquad x\in\Omega\subset\mathbb{R}^d,
\end{equation*}
often viewed as a canonical model for fully nonlinear elliptic PDEs \cite{dephilippis}. By contrast, comparatively fewer works address broader classes of fully nonlinear second-order equations.

Most Galerkin-based approaches to fully nonlinear PDEs can be viewed as combining two ingredients, \emph{linearization} and \emph{discretization}, and the order in which they are applied leads to different schemes. A common linearization strategy is the Newton's method. In the \emph{linearize–then–discretize} approach, one first replaces the nonlinear PDE by a sequence of linear problems and then applies a Galerkin discretization. 
Within this paradigm, the damped Newton iteration is the most common choice \cite{rapetti,lakkis,amireh}, but it is not the only one. Alternative iterative schemes include Poisson-based fixed-point schemes \cite{westphal,peraire} and augmented Lagrangian or least-squares formulations that split the nonlinear term from the differential operator \cite{dean,lagrangian}.
Despite promising numerical performance, rigorous results on local convergence of the nonlinear iteration are limited.
The only
rigorous convergence result at the continuous level for iterative methods for the Monge-Amp\`ere equation,
to the authors knowledge, is the one for a damped Newton-type iteration method, proved in \cite{rapetti} and improved in \cite{saumier}.
 By contrast, the \emph{discretize–then–linearize} approach starts from a discrete nonlinear problem and then applies linearization at the algebraic level. This direction has a broader literature \cite{bohmer,brenner,brenner2}, and typically employs Newton as the linearization mechanism. A discrete-level analysis can simplify the study of the iteration, but it requires establishing consistency between the discrete and continuous linearizations, as emphasized in \cite{bohmer,brenner}, limiting the choice of the discretization.
 Finally, a different family of Galerkin methods avoids the linearize-discretize dichotomy by working with an $\varepsilon$-regularized PDE; see, for instance, the vanishing moment method \cite{neilan} and the more recent work in \cite{gallistl}. We refer to \cite{feng,neilan-salgado} for recent surveys on numerical methods for the Monge-Amp\`ere equation.

In this work we analyze the nonlinear least-squares method introduced in \cite{dean,caboussat}, and recently revisited in \cite{peruso}. The method targets \(H^2(\Omega)\)-solutions of the Monge-Amp\`ere equation by combining a least-squares variational formulation with an iterative algorithm that decouples the nonlinearity from the differential operator at the continuous level; the resulting sequence of linear problems is then discretized. A practical appeal of this approach is its low per-iteration cost: the nonlinear update is performed pointwise, while the variational step reduces to the solution of two sequential Poisson problems, as proposed in \cite{peruso}. Numerical studies report optimal order convergence for $\mathbb{P}_1$ discretizations and the framework has been successfully extended to other fully nonlinear models \cite{dimitrios,lsorthogonal,prins,yadav}.

 \subsection{Contributions}
We study the convergence of the nonlinear least-squares splitting method introduced in \cite{dean} through the lens of alternating projections in an infinite-dimensional setting, see e.g. \cite{caboussat,peruso} for numerical evidence of convergence. Our main result, \Cref{thm:main}, establishes local linear convergence of the splitting algorithm on the two-dimensional torus $\Omega=\mathbb{T}^2$. To our knowledge, this yields the first continuous-level convergence proof for this splitting approach; see for example \cite{brenner,neilan}, where the convergence of this method is highlighted as an open problem. The regularity and uniform ellipticity assumptions in \Cref{thm:main} are comparable to those in \cite{rapetti}, where the convergence of the damped Newton's method on $\mathbb{T}^2$ is proved. We assume $u\in H^4$, whereas \cite{rapetti} assumes $u\in C^{2,\alpha}$ with $\alpha>0$, and both analyses require a uniform ellipticity condition along the solution. 

The key observation of our proof is that the iteration can be interpreted as alternating projections onto a nonconvex set and a linear subspace in Sobolev spaces.
We first analyze the scheme when the projections are taken with respect to the $H^m$ topology for Hessian matrices, with $m\ge 2$, see \Cref{sec:regular}. In this simplified setting, the composition of the two projections is Fr\'echet differentiable, thanks to the embedding $H^m \hookrightarrow L^\infty$ for $m \ge 2$ in two dimensions. A natural sufficient condition for convergence is that its Fr\'echet derivative has operator norm strictly smaller than $1$.
For compositions of Fr\'echet-differentiable projections, this reduces to checking transversality of the associated tangent spaces, see \Cref{lemma:comp_transverse}. In our setting this transversality condition follows from elliptic regularity theory and yields a clean convergence result with a contraction-mapping argument on bounded convex domains $\Omega \subset \mathbb{R}^2$, see \Cref{thm:convergenceHm}.

The original splitting algorithm in \cite{dean,caboussat,peruso} is formulated when the projections are taken with respect to the $L^2$ topology for Hessian matrices, see \Cref{sec:conv_l2}. Here the composition of the projections is not Fr\'echet differentiable, and the proof of \Cref{thm:main} is more technical. The starting point is \Cref{lemma:lipschitz}, which provides an almost-contraction estimate in $H^2$ on the two dimensional torus $\mathbb{T}^2$: the composition of the projections is $(1-\varepsilon)$-Lipschitz up to an error term that is uniform along the iteration. We then combine this estimate with \Cref{lemma:frechet-fake}, which shows that the G\^ateaux derivative of the composition of the two projections defines a bounded operator $L^2\to L^2$ and depends continuously on the base point of differentiation in the $L^\infty$ topology. Together, these ingredients allow us to recover a contraction argument and hence  linear convergence in $L^2$ of the sequence $D^2u^ n$ defined by the splitting method on $\mathbb{T}^2$. In particular, this yields convergence in $H^2$ for $u^n$.

We expect that this approach is flexible and may  be adapted to other alternating projections-based schemes for fully nonlinear PDEs, provided analogous regularity and ellipticity assumptions hold.
 
\subsection{Outline}
The remainder of the paper is organized as follows. In \Cref{sec:setting} we recall the method introduced in \cite{dean,caboussat,peruso} and fix notation and assumptions. In \Cref{sec:regular} we analyze the alternating projections viewpoint and prove convergence for the $H^m$-projection variants, $m\ge2$, on bounded convex domains proving \Cref{thm:convergenceHm}. In \Cref{sec:conv_l2} we establish local linear convergence for the original $L^2$-based algorithm in the case $\Omega=\mathbb{T}^2$, proving our main result \Cref{thm:main}. Numerical examples on $\mathbb{T}^2$ are reported in \Cref{sec:num}.

\section{Problem setting and algorithm}\label{sec:setting}
\subsection{Notation.}
We collect here the notation used throughout the paper. Let \(d\in\mathbb N\).
Matrices in \(\mathbb R^{d\times d}\) are denoted by capital letters, e.g.\ \(A\),
while matrix fields on a domain \(\Omega\subset\mathbb R^d\) are denoted by bold
capital letters, e.g.\ \(\mathbf A\). We write \(\mathbb S^d\subset\mathbb R^{d\times d}\)
for the subspace of symmetric matrices. For \(A,B\in\mathbb R^{d\times d}\) we use
the Frobenius product \(A:B := \operatorname{tr}(A^\top B),\) and the associated Frobenius norm \(|A| := \sqrt{A:A}\).

For \(m\in\mathbb N\), the Sobolev space of matrix fields is denoted by
\(H^m(\Omega;\mathbb R^{d\times d})\) and is equipped with the standard \(H^m\)-norm
induced by the Frobenius product:
\[
\|\mathbf A\|_{m}^2 := \sum_{|\alpha|\le m}\int_\Omega |D^\alpha \mathbf A(x)|^2\,dx,
\]
with the corresponding inner product defined analogously. We adopt the convention
\(H^0(\Omega)=L^2(\Omega)\) and use the shorthand \(\|\cdot\| := \|\cdot\|_0\). Given a subset \(M\subset H^m(\Omega;\mathbb R^{d\times d})\), we denote
by \(\Pi^{(m)}_M\) the metric projection onto \(M\) with respect to the \(H^m\)
inner product (and write \(\Pi := \Pi^{(0)}\)).

Let \(X\) be a Banach space. For a bounded linear operator $F:X\to X$, namely \(F\in\mathcal L(X)\), we denote
by \(\|F\|_{\mathcal L(X)}\) its operator norm:
\[
\|F\|_{\mathcal L(X)} := \sup_{\|x\|_X\le 1}\|Fx\|_X.
\]
For a  map \(F:X\to X\), we denote by \(DF(x_0 ;y)\) the
G\^ateaux derivative of \(F\) at \(x_0\in X\) in the direction \(y\in X\), whenever the
limit exists:
\[
DF(x_0; y) := \lim_{\varepsilon\to 0}\frac{F(x_0+\varepsilon y)-F(x_0)}{\varepsilon}.
\]
If the Banach space is finite dimensional we denote the G\^ateaux derivative by $\dd F (x_0; y)$.

\subsection{Model problem and algorithm.}
Let \(\Omega \subset \mathbb{R}^2\) be a bounded, convex domain with boundary \(\partial\Omega\). Assume that \(f \in C^0(\overline{\Omega})\) satisfies \(f \ge c_0 > 0\) in \(\Omega\), and let
\(g \in H^{3/2}(\partial\Omega)\). The elliptic Dirichlet Monge-Amp\`ere problem is given by
\begin{equation}\label{eq:prob}
\begin{cases}
\det D^2 u = f \quad &\text{in } \Omega,\\
u = g \quad &\text{on } \partial\Omega,
\end{cases}
\end{equation}
where the unknown function \(u\) is required to be convex and \(D^2u\) denotes its Hessian, i.e. \([D^2u]_{ij} = \partial_{x_i x_j} u\). Equation \eqref{eq:prob} is a fully nonlinear elliptic PDE prescribing
the product of the eigenvalues of the Hessian of \(u\). This stands in contrast to the classical linear Poisson equation \(-\Delta u = f\), which governs their sum. The convexity constraint on \(u\) is essential: it ensures (degenerate) ellipticity of the Monge-Amp\`ere operator and allows for regularity theory. Under additional regularity assumptions on \(\Omega\) and \(f\), classical solutions
\(u \in C^2(\overline{\Omega})\) exist; we refer to \cite{dephilippis} and references therein for sharp results in this direction. 
The Monge-Amp\`ere equation arises in a variety of applications, including the prescribed Gaussian curvature problem, as well as models in meteorology and fluid mechanics \cite{feng}. Monge-Amp\`ere type equations also arise in optimal transport, particularly in the study of
regularity and singularities of transport maps \cite{dephilippis,villani}.

We adopt the nonlinear least-squares framework of \cite{dean,caboussat}, which introduces an
auxiliary variable. Define \(\mathbf P := D^2u \in L^2(\Omega;\mathbb S^2)\). Then \eqref{eq:prob} can be rewritten as
\begin{equation}\label{eq:prob1}
\begin{cases}
\text{\rm det}\:\mathbf{P} = f \quad &\text{in } \Omega, \\
\mathbf{P} = D^2u\quad &\text{in } \Omega, \\
u = g \quad &\text{on } \partial\Omega.\\
\end{cases}
\end{equation}
Since we seek a convex solution to \eqref{eq:prob}, we impose the additional requirement that $\mathbf{P}$ is symmetric and positive definite, hereafter spd. We now introduce the functional spaces and sets used to formulate the solution of \eqref{eq:prob1}:
\begin{align} \label{d:V-g}
    \mathcal{V}_g^m &:= \{ D^2v \in H^m(\Omega,\mathbb{S}^2) :\, v \in H^{2+m}(\Omega,\mathbb{R}),\ v|_{\partial\Omega} = g \} \,, \\
    \mathcal{V}_0^m &:= \{ D^2v \in H^m(\Omega,\mathbb{S}^2) :\, v \in H^{2+m}(\Omega,\mathbb{R}),\ v|_{\partial\Omega} = 0 \},
\end{align}
and 
\begin{equation} \label{d:B}
    \mathcal{B}^m := \{ \mathbf{Q} \in H^m(\Omega,\mathbb{S}^2) :\ \det \mathbf{Q}(x) = f(x)\ \text{a.e. in } \Omega,\ \mathbf{Q}(x)\ \text{spd a.e. in } \Omega  \}.
\end{equation}
For the remainder of this section, we set $m = 0$ and write $\mathcal{V}_g := \mathcal{V}_g^0$, $\mathcal{V}_0 := \mathcal{V}_0^0$, and $\mathcal{B} := \mathcal{B}^0$.

It follows that in \eqref{eq:prob1} we seek $\mathbf{P} \in \mathcal{B}$ and $D^2u \in \mathcal{V}_g$. 
To determine the pair $(D^2u, \mathbf{P})$, we reformulate \eqref{eq:prob1} as the nonlinear least-squares problem
\begin{equation}\label{eq:leastsq}
(D^2u, \mathbf{P}) 
= \argmin_{D^2v \in \mathcal{V}_g,\; \mathbf{Q} \in \mathcal{B}}
\| D^2v - \mathbf{Q} \|^2.
\end{equation}
As observed in \cite{peruso}, we note that \eqref{eq:leastsq} may admit a solution even in cases where \eqref{eq:prob1} does not, namely when $\mathcal{V}_g \cap \mathcal{B} = \emptyset$. However, whenever \eqref{eq:prob1} has a solution, it also satisfies \eqref{eq:leastsq}, and moreover $\|D^2u - \mathbf{P}\| = 0$. To approximate the solution of \eqref{eq:leastsq}, we employ the splitting algorithm proposed in \cite{dean,caboussat}, which iteratively decomposes the minimization problem \eqref{eq:leastsq} into two subproblems. Specifically, given an initial guess $\mathbf{P}^0 \in L^2(\Omega,\mathbb{S}^2)$, for each $n \ge 0$ we seek $D^2u^n$ and $\mathbf{P}^{n+1}$ such that:
\begin{subequations}\label{eq:splitting}
\begin{align}
\label{eq:biharmonic} D^2u^{n} = &\argmin_{D^2v\in \mathcal{V}_g}\|D^2v-\mathbf{P}^n\|^2,\\
\label{eq:firstmin}\mathbf{P}^{n+1} =& \argmin_{\mathbf{Q}\in \mathcal{B}}\|D^2u^n-\mathbf{Q}\|^2.
\end{align}
\end{subequations}
This approach is an instance of alternating minimization, also known as a block coordinate descent (or Gauss-Seidel-type) scheme, and it decouples the nonlinear constraint from the variational part of the problem. More precisely, the nonlinearity is confined to the second subproblem \eqref{eq:firstmin}, while the first subproblem \eqref{eq:biharmonic} retains the variational structure and reduces to a linear biharmonic-type boundary value problem. The second subproblem is a pointwise minimization and can be solved explicitly by a Lagrange multiplier argument; see \cite{glowinski}. The first subproblem leads to a linear biharmonic-type boundary value problem. Conforming finite element discretizations of \eqref{eq:biharmonic} were introduced
in \cite{caboussat} and recently improved in \cite{peruso}. In particular, on polygonal domains \eqref{eq:biharmonic} can be reduced to two uncoupled second-order elliptic problems. Each of these is approximated using \(\mathbb P_1\) finite elements, and the Hessian \(D^2u\) is then
recovered by a post-processing technique. The main appeal of the method is that each iteration amounts to solving only low-cost, decoupled
subproblems, while still delivering optimal-order accuracy in practice; for instance, \cite{peruso} observes optimal-order convergence with respect to the mesh size. Moreover, they also report convergence with respect to the iteration.

In this work, we address the open problem of the convergence of the sequence $(D^2u^{n}, \mathbf{P}^n)$ at the continuous level. To this end, we observe that \eqref{eq:splitting} constitutes a particular instance of a block coordinate descent method, namely an alternating projections scheme onto the sets $\mathcal{V}_g$ and $\mathcal{B}$. Indeed, \eqref{eq:splitting} can be equivalently written as 
\begin{equation}\label{eq:projj}
D^2u^{n} = \Pi_{\mathcal{V}_g}(\mathbf{P}^n), 
\quad 
\mathbf{P}^{n+1} = \Pi_{\mathcal{B}}(D^2u^n),
\quad 
\forall n \ge 0, 
\quad 
\mathbf{P}^0 \text{ given},
\end{equation}
where $\Pi_{\mathcal{B}} : L^2 \to \mathcal{B}$ and $\Pi_{\mathcal{V}_g} : L^2 \to \mathcal{V}_g$ denote the projections onto $\mathcal{B}$ and $\mathcal{V}_g$, respectively, defined as in \eqref{eq:splitting}. The operator $\Pi_{\mathcal{V}_g}$ is well defined, since $\mathcal{V}_g$ is an affine subspace. The well-posedness of $\Pi_{\mathcal{B}}$, however, \textit{i.e} the existence of the corresponding minimizer, requires additional discussion because of the nonlinear constraint $\det \mathbf{Q} = f$. Following \cite{glowinski}, we note that, as no derivatives are involved, the projection is  well-defined pointwise almost everywhere in $\Omega$. Moreover, the projection is locally smooth, and the minimizer is unique, as discussed in \Cref{sec:conv_l2}. In order to study the convergence of \eqref{eq:splitting}, we introduce the operator $T: L^2\to\mathcal{B}$, defined as the composition of the two projections, i.e. 
\begin{equation}\label{eq:defT}
T:=\Pi_{\mathcal{B}}\circ \Pi_{\mathcal{V}_g}.    
\end{equation}
Accordingly, the iterative scheme \eqref{eq:projj} can be written as $\mathbf{P}^{n+1} = T(\mathbf{P}^n)$, $n\geq 0$.

While the convergence of the alternating projections method is well understood for finite-dimensional settings \cite{alternating}, the infinite-dimensional case requires more careful analysis. In general, one seeks to show that the composition of the two projections has a norm strictly less than one, so that the Banach fixed-point theorem can be applied. When at least one of the projections is onto a nonlinear set, it is customary to linearize the operator near the solution to obtain a local convergence result. To establish that the norm is strictly less than one, one can employ the notion of transverse spaces, see \Cref{lemma:comp_transverse}.

In the following, we assume the validity of the property:
\begin{enumerate}
    \item[\refstepcounter{assump}\label{A1}(\theassump)]  $\mathbf{P} \in \mathcal{B}$ is uniformly elliptic, i.e., there exist constants $\nu_1, \nu_2 > 0$ such that
    \[
    \nu_1 |\xi|^2 \le \xi^T \mathbf{P}(x) \xi \le \nu_2 |\xi|^2, 
    \quad \forall \,\xi \in \mathbb{R}^2, \ \text{a.e. in } \Omega.
    \]
\end{enumerate}
We further note that, due to the symmetry of $\mathbf{P}$, (A1) implies that $\mathbf{P} \in L^{\infty}(\Omega, \mathbb{S}^2)$.

\section{Convergence in $H^m(\Omega,\mathbb{S}^d)$}\label{sec:regular}
The splitting iteration \eqref{eq:splitting} is the one proposed in the seminal work \cite{caboussat}. When formulated with $L^2$-projections, the associated map $T: L^2 \to L^2$ is not Fr\'echet differentiable as a map of $L^2(\Omega,\mathbb{S}^d)$, and the convergence analysis therefore needs more delicate arguments; see \Cref{sec:conv_l2}.

If, however, the exact Monge-Amp\`ere solution is sufficiently regular, one can consider the same iteration with projections taken in $H^m(\Omega,\mathbb{S}^d)$ for $m$ large enough so that $H^m\hookrightarrow L^\infty$ (e.g.\ $m> d/2$), i.e.:  
\begin{equation}\label{eq:alg-hm}
D^2u^{n} = \argmin_{D^2v\in \mathcal{V}_g^m}\|D^2v-\mathbf{P}^n\|^2_m,\quad \mathbf{P}^{n+1} = \argmin_{\mathbf{Q}\in \mathcal{B}^m}\|D^2u^n-\mathbf{Q}\|^2_m.
\end{equation}
The well-posedness of the projection on $\mathcal{B}^m$ is discussed in \Cref{lemma:frechetHm}. In this case, the composition map $T^{(m)} :=\Pi^{(m)}_{\mathcal{B}^m}\circ \Pi^{(m)}_{\mathcal{V}^m_g} : H^m \to H^m$ is Fr\'echet differentiable on $H^m$, and the convergence proof reduces to a  fixed-point argument showing that
\[
\|DT^{(m)}\|_{\mathcal{L}(H^m)}<1
\]
there, yielding a contraction. Establishing convergence in this higher-regularity setting serves two purposes. First, it introduces notions and estimates that will be reused in the $L^2$-based analysis. Second, it yields a convergence proof for a version of the method that is not inherently restricted to two dimensions and can be formulated for $d>2$, albeit at higher computational cost. For clarity, we present the proof in the case \(d=2\); but we indicate the (minor) modifications
needed to extend the argument to higher dimensions.

In what follows, we fix $m\in\mathbb{N}$ such that $H^m(\Omega)\hookrightarrow L^\infty(\Omega)$. In particular, for $d=2$ it suffices to take $m\ge 2$. We now prove that $\Pi_{\mathcal{B}^m}^{(m)}$ is Fr\'echet differentiable.
\begin{lemma}\label{lemma:frechetHm}
Let $f\in H^m(\Omega)$. Let $\mathbf{P}\in \mathcal{B}^m\cap \mathcal{V}_g^m$ such that \eqref{A1} is satisfied. Then $\Pi^{(m)}_{\mathcal{B}^m}:H^m\to H^m$ is a well defined map in a neighborhood of $\mathbf{P}$ and is Fr\'echet differentiable in $\mathbf{P}$, with 
$$D\Pi^{(m)}_{\mathcal{B}^m}(\mathbf{P}) = \Pi^{(m)}_{\ker (\cof\mathbf{P})},$$
where $\ker (\cof\mathbf{P}):=\{\mathbf{X}\in H^m(\Omega,\mathbb{S}^2):\, \cof\mathbf{P}(x):\mathbf{X}(x) = 0\text{ a.e. in }\Omega\}$\footnote{Note that this is the kernel of the pointwise operator $\cof (\mathbf{P}):L^2 \to L^2 $ defined as $\cof (\PP) (\XX) (x)= (\cof (\PP) (x) : \XX (x)) \cof (\PP)(x) \in L^2$, where the last holds true since $\PP \in L^\infty$.} and $\Pi^{(m)}_{\ker (\cof\mathbf{P})}:H^m\to H^m$ is the projection on $\ker (\cof\mathbf{P})$. Moreover, $DT^{(m)}(\mathbf{P}) = \Pi_{\ker (\cof\mathbf{P})} \circ \Pi_{\mathcal{V}_0^m}$.
\end{lemma}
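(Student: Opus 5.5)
Our plan is to view $\mathcal{B}^m$ near $\mathbf{P}$ as a smooth submanifold of $H^m(\Omega,\mathbb{S}^2)$ and use the standard fact that, for a $C^2$ submanifold of a Hilbert space, the metric projection is well defined and smooth on a neighborhood and its derivative at a point of the manifold is the orthogonal projection onto the tangent space. First, by (A1) and $H^m\hookrightarrow L^\infty$, the ball $U=\{\mathbf{Q}:\|\mathbf{Q}-\mathbf{P}\|_m<\delta\}$ contains only fields that are uniformly positive definite, say with eigenvalues in $[\nu_1/2,2\nu_2]$. Since $H^m$ ($m\ge2$, $d=2$) is a Banach algebra, the map $G:U\to H^m(\Omega)$, $G(\mathbf{Q})=\det\mathbf{Q}-f$, is a polynomial in the entries. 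Hence it is $C^\infty$, and $DG(\mathbf{Q})\XX=\cof\mathbf{Q}:\XX$. Then $\mathcal{B}^m\cap U=G^{-1}(0)\cap U$, because the spd condition is automatic in $U$.

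Second, we show that $DG(\mathbf{P})$ is surjective with a bounded right inverse. Given $h\in H^m$, set $\XX=h\,\cof\mathbf{P}/|\cof\mathbf{P}|^2$. Then $\cof\mathbf{P}:\XX=h$. Since $|\cof\mathbf{P}|^2=|\mathbf{P}|^2\ge 2\nu_1^2>0$ and $\mathbf{P}\in H^m$, we get $1/|\cof\mathbf{P}|^2\in H^m$ by composing a smooth function with an $H^m\cap L^\infty$ function bounded away from zero. So $\XX\in H^m$. The kernel $\ker(\cof\mathbf{P})$ is a closed subspace with complement $\{\phi\,\cof\mathbf{P}:\phi\in H^m\}$. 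The implicit function theorem then shows that $\mathcal{B}^m$ is locally a $C^\infty$ Hilbert submanifold with tangent space $\ker(\cof\mathbf{P})$ at $\mathbf{P}$.

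Third, we obtain the projection. For $\mathbf{R}$ near $\mathbf{P}$, a nearest point $\mathbf{Q}$ on the manifold satisfies the normality condition $\mathbf{R}-\mathbf{Q}\perp_{H^m}T_{\mathbf{Q}}\mathcal{B}^m$. We parametrize this using the normal bundle: let $N_{\mathbf{Q}}$ denote the $H^m$-orthogonal complement of $\ker(\cof\mathbf{Q})$, and define $\Phi(\mathbf{Q},\mathbf{N})=\mathbf{Q}+\mathbf{N}$ for $\mathbf{Q}\in\mathcal{B}^m$ and $\mathbf{N}\in N_{\mathbf{Q}}$. At $(\mathbf{P},0)$ the derivative is $(\XX,\mathbf{N})\mapsto \XX+\mathbf{N}$, which is an isomorphism $\ker(\cof\mathbf{P})\oplus N_{\mathbf{P}}\to H^m$. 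The inverse function theorem gives a smooth local inverse, and hence a smooth map $\mathbf{R}\mapsto\mathbf{Q}(\mathbf{R})$ whose derivative at $\mathbf{P}$ is the orthogonal projection onto $\ker(\cof\mathbf{P})$. To see that this critical point is the unique global minimizer, and so equals $\Pi^{(m)}_{\mathcal{B}^m}(\mathbf{R})$, we use a tubular-neighbourhood argument. Existence of a minimizer over all of $\mathcal{B}^m$ is the delicate part, since $\mathcal{B}^m$ is not weakly closed in an obvious way. We would handle it by noting that weak $H^m$ limits are strong $L^\infty$ (indeed $C^0$) limits on bounded $\Omega$, by compact embedding, so the constraints $\det=f$ and spd are preserved and the norm is weakly lsc. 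Minimizers then exist, and for $\mathbf{R}$ close to $\mathbf{P}$ they lie close to $\mathbf{P}$, since $\mathbf{P}$ itself is a competitor. Uniqueness comes from positive reach: the second fundamental form is bounded near $\mathbf{P}$, so within the tubular neighbourhood the normal map is injective. We expect this step, global well-posedness of the metric projection, to be the main obstacle. The key tools are the compactness and the local $C^2$ bounds on $G$.

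Finally, $\Pi^{(m)}_{\mathcal{V}_g^m}(\mathbf{R})=D^2u_g+\Pi^{(m)}_{\mathcal{V}_0^m}(\mathbf{R}-D^2u_g)$ is affine, with linear part $\Pi^{(m)}_{\mathcal{V}_0^m}$, and it fixes $\mathbf{P}$ because $\mathbf{P}\in\mathcal{V}_g^m$. The chain rule at $\mathbf{P}$, using that $\Pi^{(m)}_{\mathcal{V}_g^m}(\mathbf{P})=\mathbf{P}$ lies in the domain where $\Pi^{(m)}_{\mathcal{B}^m}$ is differentiable, gives $DT^{(m)}(\mathbf{P})=\Pi^{(m)}_{\ker(\cof\mathbf{P})}\circ\Pi^{(m)}_{\mathcal{V}_0^m}$.
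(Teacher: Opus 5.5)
Your proposal is correct and follows the same route as the paper. It applies the implicit function theorem to $\mathbf{Q}\mapsto\det\mathbf{Q}-f$ with an explicit bounded right inverse: you use $h\,\cof\PP/|\cof\PP|^2$ where the paper uses $\frac{h}{2\det\PP}\PP$. A tubular neighbourhood then gives a smooth metric projection whose derivative at $\PP$ is the $H^m$-orthogonal projection onto $\ker(\cof\PP)$, and the chain rule with the affine projection onto $\mathcal{V}_g^m$ finishes. Your extra argument for global existence and uniqueness of the nearest point goes beyond the paper, which only cites Lang's tubular neighbourhood theorem for this step. That argument uses weak closedness of $\mathcal{B}^m$ via the compact embedding $H^m\hookrightarrow C^0$ and $f>0$, localization with $\PP$ as a competitor, and injectivity of the normal map.
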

\begin{proof}
Define
\[
\Phi: H^m(\Omega,\mathbb{S}^2) \to H^m(\Omega), 
\qquad 
\Phi(\PP)(x) := \det \PP(x) - f(x).
\]
Since $H^m \hookrightarrow L^\infty$ and $\det: \mathbb{S}^2 \to \mathbb{R}$ is smooth, the map $\PP \mapsto \det \PP$ is $C^\infty$ as a map $H^m(\Omega,\mathbb{S}^2) \to H^m(\Omega)$. Hence, $\Phi$ is $C^\infty$. For $\XX \in H^m(\Omega,\mathbb{S}^2)$, the pointwise derivative of the determinant gives
\[
D\Phi(\PP ; \XX) (x) = \dd \det  ( \PP(x) ;\XX(x)) = \cof \PP(x) : \XX(x),
\]
so $D\Phi(\PP; \cdot )$ is a bounded linear operator $H^m(\Omega,\mathbb{S}^2) \to H^m(\Omega)$. Let $g \in H^m(\Omega)$, define
\[
\mathbf{H}_g(x) := \frac{g(x)}{2 \det \PP(x)} \, \PP(x).
\]
Since $\det \PP \ge c_0 > 0$, we have $\mathbf{H}_g \in H^m(\Omega,\mathbb{S}^2)$ and
\[
D\Phi(\PP ;\mathbf{H}_g) (x) = \cof \PP(x) : \mathbf{H}_g(x) = g(x).
\]
Thus $D\Phi(\PP; \cdot )$ is surjective, with bounded linear inverse $g \mapsto \mathbf{H}_g$. Its kernel is
\[
\ker D\Phi(\PP) = \{ \mathbf{H} \in H^m(\Omega,\mathbb{S}^2) : \cof \PP : \mathbf{H} = 0 \text{ a.e. in } \Omega \} =: T_\PP \mathcal{B}^m.
\]
By the Banach implicit function theorem, $\mathcal{B}^m = \Phi^{-1}(0)$ is a $C^\infty$ submanifold of $H^m(\Omega,\mathbb{S}^2)$ near $\PP$, see \cite[Chapter 2]{lang}. Moreover, $\Pi_{\mathcal{B}^m}$ is a well-defined and smooth map near $\PP$ by the existence of the local tubular neighborhood \cite{lang}. The differential of $\Pi_{\mathcal{B}^m}$ onto this submanifold at $\PP$ is the projection onto $T_\PP \mathcal{B}^m$, i.e., $D\Pi_{\mathcal{B}^m}(\PP; \cdot ) = \Pi_{\ker (\cof \PP)} (\cdot )$. Finally, since $T^{(m)} = \Pi^{(m)}_{\mathcal{B}^m} \circ \Pi^{(m)}_{\mathcal{V}_g^m}$ is a composition of Fr\'echet-differentiable maps and $D\Pi^{(m)}_{\mathcal{V}_g^m} = \Pi^{(m)}_{\mathcal{V}_0^m}$, we have
\[
DT^{(m)}(\PP; \cdot ) = \Pi^{(m)}_{\ker (\cof \PP)} \circ \Pi^{(m)}_{\mathcal{V}_0^m} (\cdot ).
\]
\end{proof}
To show that $\|DT^{(m)}\|_{\mathcal{L}(H^m)}<1$, we rely on the following result. We include a proof of this classical result for completeness, for additional background see \cite{twoproj} and the references therein.
\begin{lemma}\label{lemma:comp_transverse}
Let $W,Z$ be closed linear subspaces of a Hilbert space $(V,\|\cdot \|)$ such that $V = W\oplus Z$, then there exists $0\leq c<1$ such that
$$\|\Pi_W \Pi_Z\|_{\mathcal{L}(V)} \leq c.$$
\end{lemma}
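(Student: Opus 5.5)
The plan is to reduce the bound to a strict inequality on the cosine of the angle between $W$ and $Z$, and to get that inequality from the boundedness of the oblique projections in the direct sum $V = W\oplus Z$.

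First, since $\Pi_W$ and $\Pi_Z$ are orthogonal projections, $\|\Pi_W\Pi_Z\|_{\mathcal L(V)} = \sup\{ |\langle w,z\rangle| : w\in W,\ z\in Z,\ \|w\|=\|z\|=1\}$. Indeed, for $x\in V$ we have $\|\Pi_W\Pi_Z x\| = \sup_{\|w\|\le 1,\,w\in W}\langle w,\Pi_Z x\rangle = \sup_{w}\langle \Pi_Z w, x\rangle$. Taking the supremum over $\|x\|\le1$ gives $\sup_{w\in W,\|w\|\le 1}\|\Pi_Z w\|$, and this equals $\sup_{w,z}\langle w,z\rangle$ over unit vectors. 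Call this quantity $c$; clearly $c\le 1$, so the task is to show $c<1$.

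Second, we use the direct sum. Let $P:V\to V$ be the (oblique) projection onto $W$ along $Z$, i.e.\ $P(w+z)=w$. Since $W,Z$ are closed and $V=W\oplus Z$, the map $W\times Z\to V$, $(w,z)\mapsto w+z$, is a continuous linear bijection of Banach spaces. By the open mapping theorem its inverse is bounded, so $\|w\|\le K\|w+z\|$ for some $K<\infty$ and all $w\in W$, $z\in Z$. This is the only genuinely infinite-dimensional step, and it is where closedness of both subspaces is used. Without it (e.g.\ $W\cap Z=\{0\}$ but $W+Z$ not closed) the angle can degenerate to zero.

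Third, we convert this to an angle estimate. For unit vectors $w\in W$ and $z\in Z$, apply the bound to $w$ and $-z$: $1=\|w\|\le K\|w-z\|$. Then
\[
\|w-z\|^2 = 2-2\langle w,z\rangle\ge K^{-2},
\]
so $\langle w,z\rangle\le 1-\tfrac{1}{2K^2}$. Replacing $z$ by $-z$ handles the absolute value. Hence $c\le 1-\tfrac1{2K^2}<1$, and the supremum is uniform, which proves the claim.

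The main obstacle is the second step, the uniform bound $\|w\|\le K\|w+z\|$. A naive compactness argument is unavailable in infinite dimensions, so the open mapping (or closed graph) theorem is essential.
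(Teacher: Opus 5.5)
Your proof is correct and follows essentially the same route as the paper: you identify $\|\Pi_W\Pi_Z\|_{\mathcal{L}(V)}$ with $\sup|\langle w,z\rangle|$ over unit vectors $w\in W$, $z\in Z$, and then apply the bounded inverse of $(w,z)\mapsto w+z$ to the pair $(w,-z)$, using $\|w-z\|^2=2-2\langle w,z\rangle$. The only difference is that you derive the explicit bound $c\le 1-\tfrac{1}{2K^2}$ directly, whereas the paper argues by contradiction with sequences satisfying $\langle w_n,z_n\rangle\to 1$.
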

\begin{proof} For any $z\in Z$ we have
\[
  \|\Pi_W z\| = \sup_{\substack{w\in W\\ \|w\|=1}} \langle \Pi_W z, w\rangle 
  = \sup_{\substack{w\in W\\ \|w\|=1}} \langle z, w\rangle,
\]
since $\langle \Pi_W z,w\rangle = \langle z,w\rangle$ for $w\in W$.  
Hence
\[
  \|\Pi_W \Pi_Z\|_{\mathcal{L}(V)} = \sup_{\substack{z\in Z\\ \|z\|=1}} \|\Pi_W z\|
  = \sup_{\substack{w\in W,\,z\in Z\\ \|w\|=\|z\|=1}} |\langle w,z\rangle|.
\]
Now assume $V = W \oplus Z$ and suppose, by contradiction, that $\|\Pi_W \Pi_Z\|_{\mathcal{L}(V)}=1$.  
Then there exist at least two sequences $(w_n)\subset W$, $(z_n)\subset Z$ with $\|w_n\|=\|z_n\|=1$ such that $\langle w_n,z_n\rangle \to 1$. Now, compute
\[\|w_n - z_n\|^2 = \|w_n\|^2 + \|z_n\|^2 - 2\langle w_n,z_n\rangle = 2 - 2\langle w_n,z_n\rangle \to 0,\]
so that $w_n - z_n \to 0$ in $V$. Since $V = W \oplus Z$, the map $T: W\times Z \to V$, $T(a,b) = a+b,
$ has a bounded inverse $T^{-1}$, i.e.
there exists $K>0$ such that
\[
  \|w\| + \|z\| \le K \|w+z\| \quad \forall\, w\in W,\, z\in Z.
\]
Applying this to $(w_n,-z_n)$ gives a contradiction
\[
 2= \|w_n\| + \|z_n\| \le K \|w_n - z_n\|\to 0, \qquad n \to \infty\,.
\]
\end{proof}   
Combining \Cref{lemma:frechetHm} and \Cref{lemma:comp_transverse}, we obtain the following theorem, which completes the convergence analysis in $H^m$ for the sequence $\PP^n$. This yields convergence in $H^{m+2}$ for $u^n$.
\begin{theorem}\label{thm:convergenceHm} 
Assume that $\Omega \subset \mathbb{R}^2$ is a convex bounded domain and $f\in H^m(\Omega)$, $m\geq 2$. Let $u \in H^{m+2} (\Omega)$ be a solution to \eqref{eq:prob} and $\mathbf{P}=D^2u\in\mathcal{B}^m\cap \mathcal{V}_g^m$ be  such that \eqref{A1} holds. Then, there exists $\delta>0$ such that, for any $\PP^0\in H^m(\Omega,\mathbb{S}^2)$ satisfying $\|\PP - \PP^0\|_m <\delta$, the sequence $\{\PP^n\}_{n\geq 0}$ in \eqref{eq:alg-hm} satisfies
\begin{equation}\label{eq:conv}
\norm{\PP^{n+1} - \mathbf P}_m \le \rho^n\norm{\PP-\PP^0}_m\quad\forall\,n\ge 0,
\end{equation}
with $\rho <1$. In particular, 
$$\|\PP^n - \mathbf{P}\|_m=\|\PP^n - D^2u\|_m\to 0$$
\end{theorem}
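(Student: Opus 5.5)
The plan is a contraction argument around the fixed point $\PP$. Since $\PP\in\mathcal{B}^m\cap\mathcal{V}_g^m$, we have $\Pi^{(m)}_{\mathcal{V}_g^m}(\PP)=\PP$ and $\Pi^{(m)}_{\mathcal{B}^m}(\PP)=\PP$, so $T^{(m)}(\PP)=\PP$. By \Cref{lemma:frechetHm}, $T^{(m)}$ is well defined and Fr\'echet differentiable near $\PP$ (indeed $C^1$, since the tubular-neighborhood projection is smooth), with
\[
DT^{(m)}(\PP)=\Pi^{(m)}_{\ker(\cof\PP)}\circ\Pi^{(m)}_{\mathcal{V}_0^m}.
\]
If $\|DT^{(m)}(\PP)\|_{\mathcal L(H^m)}\le c<1$, pick $\rho\in(c,1)$. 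Continuity of $DT^{(m)}$ gives $\delta>0$ with $\|DT^{(m)}(\mathbf Q)\|\le\rho$ on the ball $B_\delta(\PP)$. The mean value inequality on this convex ball then yields
\[
\|T^{(m)}(\mathbf Q)-\PP\|_m\le\rho\|\mathbf Q-\PP\|_m .
\]
Hence the ball is invariant, and induction gives \eqref{eq:conv}, up to the indexing shift. Convergence of $u^n$ in $H^{m+2}$ then follows, since $\Pi^{(m)}_{\mathcal{V}_g^m}$ is affine with $1$-Lipschitz linear part, and elliptic regularity on the convex domain controls $\|v\|_{m+2}$ by $\|D^2v\|_m$ for $v|_{\partial\Omega}=0$.

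The core step is the bound $\|DT^{(m)}(\PP)\|<1$, which I would obtain from \Cref{lemma:comp_transverse} with $V=H^m(\Omega,\mathbb S^2)$, $W=\ker(\cof\PP)$ and $Z=\mathcal{V}_0^m$. Both are closed subspaces. $W$ is the kernel of the bounded map $\XX\mapsto\cof\PP:\XX$, and $Z$ is closed by the estimate $\|v\|_{m+2}\lesssim\|D^2v\|_m$. It then remains to show $V=W\oplus Z$.

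For this decomposition, given $\mathbf{Y}\in H^m$ we seek $v$ with $v|_{\partial\Omega}=0$ and $\mathbf{Y}-D^2v\in W$. This amounts to the linearized Monge--Amp\`ere problem
\[
\cof\PP:D^2v=\cof\PP:\mathbf{Y}\quad\text{in }\Omega,\qquad v=0\text{ on }\partial\Omega.
\]
By (A1), $\cof\PP$ is uniformly elliptic, since it has the same eigenvalues as $\PP$ in $2$D. Because $\PP=D^2u$, the operator is in divergence form: $\operatorname{div}\cof D^2u=0$, so $\cof\PP:D^2v=\operatorname{div}(\cof\PP\,\nabla v)$. The coefficients lie in $H^m\subset W^{1,p}\cap C^0$ for $m\ge2$. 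Lax--Milgram gives a unique $H^1_0$ solution, and higher elliptic regularity gives $v\in H^{m+2}$ with $\|v\|_{m+2}\lesssim\|\cof\PP:\mathbf Y\|_m\lesssim\|\mathbf Y\|_m$, using that $H^m$ is an algebra. This proves $V=W+Z$. The sum is direct because $W\cap Z=\{0\}$: if $D^2v\in W$ with $v|_{\partial\Omega}=0$, then the homogeneous problem forces $v=0$. Boundedness of the decomposition follows from the estimate, or from the open mapping theorem.

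I expect the main obstacle to be this regularity step. One needs $H^{m+2}$ regularity for a non-smooth-coefficient elliptic equation on a convex, possibly only Lipschitz, domain. Convexity yields $H^2$ regularity (Kadlec/Grisvard) using only continuity of the coefficients. Lifting to $H^{m+2}$ requires difference quotients, with boundary smoothness or a localization argument. I would first establish $H^2$ via Grisvard's convex-domain estimate, then bootstrap interior and boundary regularity with coefficients in $H^m$. If needed, I would assume additional boundary smoothness, which is implicit in $\mathcal{V}_g^m$ being meaningful.
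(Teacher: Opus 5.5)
Your proposal is correct and follows essentially the same route as the paper. Both arguments use the fixed point $T^{(m)}(\PP)=\PP$ and the derivative $DT^{(m)}(\PP)=\Pi^{(m)}_{\ker(\cof\PP)}\circ\Pi^{(m)}_{\mathcal{V}_0^m}$ from \Cref{lemma:frechetHm}. Both obtain the transversality $\mathcal{V}_0^m\oplus\ker(\cof\PP)=H^m(\Omega,\mathbb{S}^2)$ from existence and uniqueness for $\cof\PP:D^2v=\phi$ with $v=0$ on $\partial\Omega$, and then close with \Cref{lemma:comp_transverse} and a local contraction.

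The differences are only in detail. You use $C^1$ regularity and the mean value inequality on a ball, where the paper uses just the $o(\|\XX-\PP\|_m)$ remainder at $\PP$. You justify the elliptic step through the divergence form of $\cof D^2u$ (Piola identity) and Lax--Milgram, where the paper simply invokes Agmon--Douglis--Nirenberg. You also rightly note that $H^{m+2}$ regularity needs more boundary smoothness than convexity, which the paper leaves implicit.
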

\begin{proof} Let $\mathbf{X}\in H^m(\Omega,\mathbb{S}^2)$. By \Cref{lemma:frechetHm} and the definition of Fr\'echet differentiability
\[
T^{(m)}(\mathbf{X}) - T^{(m)}(\mathbf{P}) = DT^{(m)}(\mathbf P;\mathbf{X}-\mathbf{P}) + R(\mathbf{X}-\mathbf{P}),
\]
with the remainder satisfying $\norm{R(\mathbf{X}-\mathbf{P})}_m = o(\norm{\mathbf{X}-\mathbf{P}}_m)$. We now prove that $\mathcal{V}_0^m$ and $\mathrm{ker}(\cof{\mathbf{P}})$ are transverse, i.e. $\mathcal{V}_0^m \oplus \mathrm{ker}(\cof{\mathbf{P}})=H^m(\Omega,\mathbb{S}^2)$. In particular, since $H^m $ is a Hilbert space and $\mathcal{V}_0^m$ and $\mathrm{ker}(\cof{\mathbf{P}})$ are closed, it is sufficient to prove that (a) $\mathcal{V}_0^m + \mathrm{ker}(\cof{\mathbf{P}})=H^m(\Omega,\mathbb{S}^2)$ and (b) $\mathcal{V}_0^m \cap\mathrm{ker}(\cof{\mathbf{P}})=\{\mathbf{0}\}$. To prove (a), it suffices to show that $(\cof\PP)(\mathcal{V}_0^m)=\operatorname{im}(\cof\PP)$. Hence, we need to prove the surjectivity of $\cof(\mathbf{P})$ on $\mathcal{V}_0^m$. This translates into showing that, for any $\phi\in H^m(\Omega)$, there exists a solution to the differential problem
\begin{equation}\label{eq:nondiv}
  \cof{\mathbf{P}}: D^2v = \phi\quad \text{in }\Omega,\quad  v = 0 \quad\text{on }\partial\Omega\,.  
\end{equation}
The existence of a solution is guaranteed by elliptic regularity results \cite{ADN}, assuming that \eqref{A1} holds. Hence $\mathcal{V}_0^m + \mathrm{ker}(\cof{\mathbf{P}})=H^m(\Omega,\mathbb{S}^2)$. On the other hand, let us assume that $D^2w\in \mathcal{V}_0^m\cap \mathrm{ker}(\cof{\mathbf{P}})$. Then, $D^2w$ is such that  
$$\cof{\mathbf{P}}: D^2w = 0\quad \text{in }\Omega,\quad  w = 0\quad \text{on }\partial\Omega\,.$$
Hence, by uniqueness of solutions of the elliptic problem \cite{ADN} $D^2 w \equiv \mathbf{0}$ and this proves that $\mathcal{V}_0^m \cap\mathrm{ker}(\cof{\mathbf{P}})=\{\mathbf{0}\}$. By \Cref{lemma:comp_transverse}, there exists $\rho_0<1$ such that $\|DT^{(m)}(\mathbf{P})\|_{\mathcal{L}(H^m)}=\rho_0$. Now, let $\varepsilon>0$ such that $\rho:=\rho_0+\varepsilon<1$. Furthermore, there exists $\delta$ such that for any $\mathbf{X}\in H^m(\Omega,\mathbb{S}^2)$ satisfying $\|\XX - \PP\|_m\leq \delta$, $\norm{R(\mathbf{X}-\mathbf{P})}_m \leq \varepsilon\norm{\mathbf{X}-\mathbf{P}}_m$. Then 
$$\norm{T^{(m)}(\XX) - \mathbf P}_m \le \rho\,\norm{\XX - \mathbf P}_m\quad\forall\,n\ge 0.$$
The convergence is proved by application of the Banach fixed-point theorem and the convergence is linear.
\end{proof}
\begin{remark}\label{rem:transversality}
In the proof of \Cref{thm:convergenceHm}, the result that $\|\Pi^{(m)}_{\ker(\cof \mathbf{P})}\circ \Pi^{(m)}_{\mathcal{V}_0^m}\|_{\mathcal{L}(H^m)}<1$ holds true also for $m=0$.
\end{remark}
\begin{remark}\label{rem:cordes}
For \(d>2\), assumption \eqref{A1} is no longer sufficient to guarantee the existence of a solution \(\phi\in H^m(\Omega)\) to \eqref{eq:nondiv}. Stronger structural hypotheses are required; for instance, one may impose the Cordes condition \cite{ADN}.
\end{remark}

We have shown that, when the projections are taken in $H^m$ with $m > \frac{d}{2}$ so that $H^m(\Omega)\hookrightarrow L^\infty(\Omega)$, the associated map $T$ is a local contraction and the iteration converges linearly. Although this higher-regularity variant is primarily of theoretical interest (and can be formulated in any space dimension), it isolates the key mechanism behind the method and provides a useful template for the $L^2$-based analysis developed next.

\section{Convergence in $L^2(\mathbb{T}^2,\mathbb{S}^2)$}\label{sec:conv_l2}
The fixed-point argument used in \Cref{sec:regular} relies on working in a Sobolev space $H^m(\Omega)$ which is a Hilbert space with $H^m(\Omega)\hookrightarrow L^\infty(\Omega)$, which ensures that the relevant projection map is Fr\'echet differentiable and that the iteration can be shown to be a local contraction. This approach breaks down for the original 
$L^2$-projection formulation: without 
an $L^\infty$ control, the sequence of matrices may leave the set of positive definite matrices, and the composition of the projections may fail to be Fr\'echet differentiable.

In this section we nevertheless prove a local linear convergence result for the $L^2$-based iteration on $\Omega=\mathbb{T}^2$. Assuming the Monge-Amp\`ere solution is sufficiently regular (in particular, $u\in H^4(\mathbb{T}^2)$) and that the initial guess is sufficiently close to $u$, we show that the iterates remain under control in $H^2$; see \Cref{lemma:lipschitz}. By interpolation, this yields convergence in $H^{3/2}$ for the sequence $\PP^n$, and hence in $L^2$. In particular, the sequence $\PP^n$ stays in the set of uniformly positive definite matrices since $H^{3/2} \hookrightarrow L^\infty$.

As is standard in the periodic setting, we write the convex potential in the form
\(\tfrac{|x|^2}{2}+u(x)\) with \(u\) periodic and mean zero. With a slight abuse of notation, we thus consider the problem on \(\mathbb T^2\): find \(u\) such that
\begin{equation}\label{eq:torus}
    \det (\mathbf{I}+D^2u) = f\quad \text{on }\mathbb{T}^2,\quad \int_{\mathbb{T}^2} u= 0, \quad \frac{|x|^2}{2}+ u \text{ convex}.
\end{equation}
Here \(\mathbf{I}\) denotes the identity matrix in \(\mathbb{R}^{2\times 2}\). The splitting algorithm \eqref{eq:splitting} is unchanged; however, for the periodic setting we redefine the constraint sets \(\mathcal V\) and \(\mathcal B\) so as to be consistent with \eqref{eq:torus}. We set
\begin{equation*} \label{eq:V_new}
    \mathcal{V} := \{ D^2v \in L^2(\mathbb{T}^2,\mathbb{S}^2) :\, v \in H^{2}(\mathbb{T}^2,\mathbb{R}),\ \int_{\mathbb{T}^2}v= 0 \} \,,
\end{equation*}
and
\begin{equation*}
    \mathcal{B} := \{ \mathbf{Q} \in L^2(\mathbb{T}^2,\mathbb{S}^2) :\ \det((\mathbf{I} +  \mathbf{Q})(x)) = f(x)\ \text{a.e. in } \mathbb{T}^2,\ (\mathbf{I}+\mathbf{Q})(x)\ \text{spd a.e. in } \mathbb{T}^2  \}.
\end{equation*}
The splitting algorithm on the torus $\mathbb{T}^2$, similarly to \Cref{sec:setting}, is defined as the iterative scheme 
\begin{equation} \label{eq:algorithm-torus}
D^2u^{n} = \argmin_{D^2v\in \mathcal{V}}\|D^2v-\mathbf{P}^n\|^2,\quad \mathbf{P}^{n+1} = \argmin_{\mathbf{Q}\in \mathcal{B}}\|D^2u^n-\mathbf{Q}\|^2 \,.
\end{equation}
Therefore, 
the iteration map $T:L^2(\Omega,\mathbb{T}^2)\to \mathcal{B}$ is defined by $T = \Pi_\mathcal{B}\circ \Pi_{\mathcal{V}}$, where $\Pi_{\mathcal{V}}$ is the $L^2$ projection onto $\mathcal{V}$ and $\Pi_{\mathcal{B}}$ is the $L^2$ projection onto $\mathcal{B}$, which is equivalently defined pointwise.
\subsection{Differentiability of $\Pi_{\mathcal{B}}$ and $T$}\label{secc:lipschitz}
In $L^2$, the projection on $\mathcal{B}$ boils down to a pointwise projection, given that no derivatives are involved. For any $x \in \mathbb{T}^2$ we define 
\begin{align*} 
    \mathcal{B}_x:=\{Q\in  \mathbb{S}^2:\det(I +  Q) = f(x),\ I+Q \text{ spd}\},
\end{align*}
and $\Pi_{\mathcal{B}_x}:\mathbb{S}^2\to \mathcal{B}_x$ as the projection on $\mathcal{B}_x$ with respect to the Frobenius norm.  For $\XX\in L^2(\mathbb{T}^2,\mathbb{S}^2)$, it holds: 
\begin{align*}
    \Pi_{\mathcal{B}_x}(\XX(x)) = \Pi_{\mathcal{B}}(\XX)(x)\quad \text{a.e. in }\mathbb{T}^2,
\end{align*}
and 
\begin{align}\label{eq:pointwise-T}
     T(\XX(x))  = (\Pi_{\mathcal{B}} \circ \Pi_{\mathcal{V}} (\XX)) (x)= \Pi_{\mathcal{B}_x} (  \Pi_{\mathcal{V}} (\XX) (x))\quad \text{a.e. in }\mathbb{T}^2,
\end{align}
Moreover, $\mathcal{B}_x$ is a smooth embedded hypersurface of $\mathbb S^2$ and it is known that the projection $\Pi_{\mathcal{B}_x}$ is $C^\infty$ on its maximal open domain \cite{krantz,dudek,leobacher}. We now define $\dd_M \Pi_{\mathcal{B}_x}$ as the derivative with respect to  $M$ of the map $\Pi_{\mathcal{B}_x} (M)$. We also denote by $\dd_f \Pi_{\mathcal{B}_x}$ as the derivative with respect to $f(x)$ of the map $\Pi_{\mathcal{B}_x}$ where the dependence on $f(x)$ is in the definition of $\mathcal{B}_x$. 
\begin{lemma} \label{lemma:uniform-cont}
Let $f \in C^0 (\mathbb{T}^2)$ and $\XX$ be such that  $\XX (x) \in\mathcal{B}_x$ and  $|(I + \XX (x))^{-1}| \leq M$ for all $x \in \mathbb{T}^2$. Then, there exist $\delta = \delta (M)>0$ and $C>0$ such that
for all $\|\YY - \XX\|_{L^\infty}< \delta $
\begin{align}\label{eq:boundedness-DM}
    \sup_{x \in \mathbb{T}^2}   | \dd_f^i \dd_M^j  \Pi_{\mathcal{B}_x} (\YY (x))| \leq C \,, \quad \forall j=1,2,3\,, i=0,1,2\,.
\end{align} 

Furthermore, 
for all $\eps>0$ there exists $\delta = \delta (\eps , M)>0$, such that for all $\|\YY - \XX\|_{L^\infty}< \delta $
\begin{align} \label{eq:continuity-DM}
    \sup_{x \in \mathbb{T}^2} | \dd_M \Pi_{\mathcal{B}_x}(\YY (x)) - \dd_M \Pi_{\mathcal{B}_x}(\XX(x)) | < \eps \,.
\end{align} 
\end{lemma}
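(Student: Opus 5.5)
The plan is to reduce everything to a finite-dimensional statement about a single smooth map on a compact set, and then use uniform continuity on that compact set.

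\textbf{The parametrized projection.} Consider
\[
\Psi:(Q,s)\mapsto \Pi_{\{Q'\in\mathbb S^2:\ \det(I+Q')=s,\ I+Q' \text{ spd}\}}(Q),
\]
defined on an open subset of $\mathbb S^2\times(0,\infty)$, so that $\Pi_{\mathcal B_x}(M)=\Psi(M,f(x))$. The derivatives $\dd_f^i\dd_M^j\Pi_{\mathcal B_x}$ are then partial derivatives of $\Psi$ evaluated at $(\YY(x),f(x))$. For fixed $s>0$ the set $\Sigma_s=\{\det(I+Q')=s,\ I+Q'\text{ spd}\}$ is a smooth hypersurface, since $\nabla\det(I+Q')=\cof(I+Q')\neq0$. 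Moreover the family $\{\Sigma_s\}$ is itself the level-set family of the smooth submersion $Q'\mapsto \det(I+Q')$.

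\textbf{Joint smoothness of $\Psi$.} Near a point $Q_0\in\Sigma_{s_0}$, I would characterize $\Psi(Q,s)$ as the solution $(Q',\lambda)$ of the Lagrange system
\[
Q'-Q=\lambda\,\cof(I+Q'),\qquad \det(I+Q')=s.
\]
At $(Q,s)=(Q_0,s_0)$, $Q'=Q_0$, $\lambda=0$, the Jacobian in $(Q',\lambda)$ is
\[
(H,\mu)\mapsto\bigl(H-\mu\,\cof(I+Q_0),\ \cof(I+Q_0):H\bigr).
\]
This map is invertible because $\cof(I+Q_0)\neq 0$. The implicit function theorem then gives a $C^\infty$ function $\Psi$ on a neighborhood of $(Q_0,s_0)$. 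I also need to know that this local solution really is the (unique) nearest point. That holds because near the surface the nearest point is unique, via the tubular neighborhood with radius controlled by curvature. The curvature in turn is bounded by bounds on $|(I+Q_0)^{-1}|$, on $|I+Q_0|$ and on $s$. The radii of these neighborhoods and all derivative bounds depend continuously on $(Q_0,s_0)$.

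\textbf{Compactness.} Let
\[
K=\{(\XX(x),f(x)):x\in\mathbb T^2\}.
\]
Here I use that $f$ is continuous on compact $\mathbb T^2$ with $f\ge c_0>0$. I also use that $|(I+\XX)^{-1}|\le M$ together with $\det(I+\XX)=f\le\|f\|_\infty$. In $2$D the eigenvalues of $I+\XX$ are at least $1/M$ and have product at most $\|f\|_\infty$, so they are bounded above by $M\|f\|_\infty$. Hence $K$ lies in the compact set
\[
K'=\{(Q,s):\det(I+Q)=s\in[c_0,\|f\|_\infty],\ M^{-1}I\le I+Q\le M\|f\|_\infty I\}.
\]
Note that $\XX$ need not be continuous, which is why I work with $K'$ rather than the closure of $K$. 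By the previous step together with compactness, there is $\delta>0$ such that $\Psi$ is $C^\infty$ on the closed $\delta$-neighborhood $K'_\delta$ in the $Q$-variable with $s$ fixed. Continuous functions on a compact set are bounded, so all $\partial_s^i\partial_Q^j\Psi$ with $i\le2$, $j\le3$ are bounded there by some $C$. If $\|\YY-\XX\|_{L^\infty}<\delta$, then $(\YY(x),f(x))\in K'_\delta$ for every $x$, which gives \eqref{eq:boundedness-DM}. For \eqref{eq:continuity-DM}, $\partial_Q\Psi$ is uniformly continuous on $K'_\delta$. So given $\eps$, choose $\delta'\le\delta$ such that $|\partial_Q\Psi(Q_1,s)-\partial_Q\Psi(Q_2,s)|<\eps$ whenever $|Q_1-Q_2|<\delta'$ and both points lie in $K'_\delta$. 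Applying this pointwise with $Q_1=\YY(x)$, $Q_2=\XX(x)$, $s=f(x)$ and taking the supremum gives \eqref{eq:continuity-DM}. The essential supremum suffices, since $\YY$ is only measurable.

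\textbf{Main obstacle.} The delicate point is the uniform tubular-neighborhood radius in the second step. I must show that for $Q$ within $\delta$ of $\Sigma_s\cap\{M^{-1}\le I+Q'\le M\|f\|_\infty\}$, the global nearest point on $\Sigma_s$ is the local implicit-function solution. In particular, no far-away branch of $\Sigma_s$ may come closer. I would handle this with a reach argument. $\Sigma_s$ is closed in $\mathbb S^2$, because spd together with fixed positive determinant forces eigenvalues bounded away from $0$. On the compact piece its second fundamental form is bounded, and the distinct parts of $\Sigma_s$ stay at positive distance from this piece. Together these give a positive local reach, uniform over $s\in[c_0,\|f\|_\infty]$ by continuity in $s$. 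A nearest point of $Q$ then exists, lies within $2\delta$ of the piece, and satisfies the Lagrange system, so by local uniqueness it equals $\Psi(Q,s)$.
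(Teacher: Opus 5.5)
Your proposal is correct, but it takes a different route from the paper. The paper does not set up an implicit-function argument. It cites the literature for well-definedness and smoothness of the nearest-point map near $\mathcal{B}_x$, and then works with the explicit formula of \cite{leobacher},
\[
\dd_M\Pi_{\mathcal{B}_x}(Y)=\bigl(I-d\,L_{\Pi_{\mathcal{B}_x}(Y)}\bigr)^{-1}\Pi_{\ker(\cof(I+\Pi_{\mathcal{B}_x}(Y)))}.
\]
It bounds the shape operator by $|(I+\Pi_{\mathcal{B}_x}(Y))^{-1}|$, and controls that quantity by a Neumann-series perturbation from $X$ using $|\Pi_{\mathcal{B}_x}(Y)-X|\le 2\delta$. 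This gives an explicit smallness condition on $\delta M$ and an explicit bound on the first derivative. The higher derivatives, the $f$-derivatives and the uniform continuity are then only asserted, via smoothness plus compactness.

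\textbf{What each route buys.} The paper's formula gives quantitative information. It also exhibits the structure of the derivative: at $d=0$ it is exactly $\Pi_{\ker\cof(I+X)}$, which is what \Cref{thm:main} uses.

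Your IFT argument buys rigor exactly where the paper is terse. Joint smoothness of $\Psi$ in $(Q,s)$ genuinely justifies the $\dd_f^i\dd_M^j$ bounds; the paper's ``smooth in $x$'' cannot hold, since $f$ is only $C^0$. You prove, rather than cite, that the local Lagrange solution is the global nearest point. Your set $K'$ is actually compact, because you use $\det(I+X)=f\le\|f\|_{\infty}$ to bound the eigenvalues from above. The set $\mathbb{T}^2\times\{Y:|(I+Y)^{-1}|\le M\}$ that the paper calls compact is unbounded, so a bound of this kind is needed there too. Your IFT also recovers the tangent-projection formula: differentiating the Lagrange system at $\lambda=0$ gives $\partial_Q\Psi(X,s)=\Pi_{\ker\cof(I+X)}$.

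\textbf{Your ``main obstacle''.} It is easier than you expect, and no curvature or reach estimate is needed. Take $Q$ with $|Q-Q_0|\le\delta$ for some $Q_0\in\Sigma_s$, and let $Q^*$ be any nearest point of $Q$ on $\Sigma_s$.
\begin{itemize}
\item By the triangle inequality, $|Q^*-Q_0|\le 2\delta$.
\item Since $|\cof A|=|A|\ge\sqrt{2\det A}$ for spd $2\times2$ matrices, the multiplier satisfies $|\lambda^*|\le\delta/\sqrt{2s}$.
\item Choose $\delta$ below a Lebesgue number for finitely many IFT patches covering $K'$. Then $Q^*$ is forced to be the IFT solution, which also gives uniqueness of the nearest point.
\end{itemize}
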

\begin{proof}
Let $Y$ be such that $I + Y$ is spd and elliptic, then in \cite{leobacher} it is proved that 
\begin{align} \label{eq:explicit-DM}
    \dd_M\Pi_{\mathcal{B}_x}(Y)=\big(I-d\,L_{\Pi_{\mathcal{B}_x}(Y)}\big)^{-1}\Pi_{\ker(\cof(I+  \Pi_{\mathcal{B}_x}(Y)))}\,,
\end{align}
where $d:=|Y-\Pi_{\mathcal{B}_x}(Y)|$, $\ker(\cof(I+  \Pi_{\mathcal{B}_x}(Y)))$ is the tangent space to $\mathcal{B}_x$ at $\Pi_{\mathcal{B}_x}(Y)$ and $L_{\Pi_{\mathcal{B}_x}(Y)}: \ker(\cof(I +  \Pi_{\mathcal{B}_x}(Y)))\to \ker(\cof (I+\Pi_{\mathcal{B}_x}(Y)))$ is the shape operator defined as: 
$$L_{Q}(H) = \frac{1}{|(I+Q)^{-1}|}\Pi_{\ker(\cof (I+Q))}((I+Q)^{-1} H (I+Q)^{-1}) \,.$$
Using that $d \leq 2 \delta$ by $X \in \mathcal{B}_x$, if $ 2 \delta \|L_{\Pi_{\mathcal{B}_x}(Y)} \|_{\mathcal{L}} < 1$ we have 
\begin{align*}
\operatorname{Lip} (\Pi_{\mathcal{B}_x})_{| B(X,\delta)}
& \leq \sup_{Y\in B(X,\delta)} \|\dd_M\Pi_{\mathcal{B}_x}(Y) \|_{\mathcal{L}} \leq \sup_{Y\in B(X,\delta)} \| \big(I-d\,L_{\Pi_{\mathcal{B}_x}(Y)}\big)^{-1}\|_{\mathcal{L}} 
\\
& \leq \sup_{Y\in B(X,\delta)} 1 + \sum_{j \geq 1} (2\delta \| L_{\Pi_{\mathcal{B}_x}(Y)}\|_{\mathcal{L}})^j   \,.
\end{align*}
Finally, using that 
\begin{align*}
\|L_{\Pi_{\mathcal{B}_x}(Y)}\|_{\mathcal{L}} &=
    \sup_{|H| \leq 1} |L_{\Pi_{\mathcal{B}_x}(Y)} (H)|
      \leq  |(I+\Pi_{\mathcal{B}_x}(Y))^{-1}| = |(I+X + \Pi_{\mathcal{B}_x}(Y) -X)^{-1}|
    \\
    & = | (I + (I+X)^{-1}( \Pi_{\mathcal{B}_x}(Y) -X) )^{-1}(I+X)^{-1}|
    \\
    & \leq |(I+X)^{-1}| |(I + (I+X)^{-1}( \Pi_{\mathcal{B}_x}(Y) -X) )| \leq M \frac{2\delta M}{1- 2 \delta M} \,,
\end{align*}
where in the last inequality we have used Neumann series expansion and $|\Pi_{\mathcal{B}_x}(Y) - X|\leq 2 \delta$. Therefore, if $\frac{(2\delta M)^2}{1- 2 \delta M}< 1 $, we deduce  $ 2 \delta \|L_{\Pi_{\mathcal{B}_x}(Y)} \|_{\mathcal{L}} < 1$ and for $\delta>0$ small enough we deduce \eqref{eq:boundedness-DM} with $j=1$ and $i=0$.
We have just shown that the map
$$ (x, Y) \in \mathbb{T}^2 \times \{ Y \in \mathbb{S}^2: |(I +Y)^{-1}| \leq M\} \to \dd_M\Pi_{\mathcal{B}_x}(Y) $$
is bounded.  Since this map depends smoothly on $Y$ and $x$ by the explicit formula \eqref{eq:explicit-DM} and the fact that the manifold $ \mathcal{B}_x$ depends smoothly on $f$ and $f \in C^0$, we conclude that the map and each derivative $\dd_M^j$ with $j \geq 0$ and $\dd_f^i$ for $i \geq 0$ are continuous on this space from which the result \eqref{eq:boundedness-DM} follows. Finally, using that  $
\mathbb{T}^2 \times \{ Y \in \mathbb{S}^2: |(I +Y)^{-1}| \leq M\}$ is a compact set we deduce uniform continuity, from which \eqref{eq:continuity-DM} follows.
\end{proof}

As previously mentioned, the operator $T:L^2\to L^2$ is not Fr\'echet differentiable because $\Pi_{\mathcal{B}}$ is not and \Cref{lemma:frechetHm} is not valid here. Nevertheless, close to the solution, it is possible to linearize $T$ using the G\^ateaux derivative.
\begin{lemma} \label{lemma:frechet-fake}
    Let $\PP \in \mathcal{B}\cap \mathcal{V}$ such that $\mathbf{I} + \PP$ satisfies \eqref{A1}. Assume that $\XX \in L^\infty(\mathbb{T}^2,\mathbb{S}^2)$ is such that $\| \XX - \PP \|_{L^\infty} < \delta$ with $\delta < \nu_1/2$, then the  operator
    $$ D \Pi_{\mathcal{B}}: \{\mathbf{A} \in  L^\infty (\mathbb{S}^2): \| \mathbf{A} - \PP \|_{L^\infty}< \delta  \} \to \mathcal{L} (L^p, L^p) \,,$$
    for $p\in [1,+\infty]$ defined as  $\mathbf{A} \in L^\infty \to D \Pi_{\mathcal{B}} (\mathbf{A}; \cdot ) \in \mathcal{L} (L^p, L^p) $
    \begin{equation*}\label{eq:def-DP}
         D \Pi_{\mathcal{B}} (\mathbf{A}; \mathbf{B}) (x): = \dd_M \Pi_{\mathcal{B}_x} (\mathbf{A} (x);  \mathbf{B}  (x)), 
    \end{equation*}
    is continuous  and satisfies for all $j=1,2,$ and $ i=0,1,2$
    \begin{align} \label{eq:uniformLp}
          \|\dd_M^j \dd^i_f D \Pi_{\mathcal{B}} (\mathbf{A}; \mathbf{B})\|_{L^p } \leq C \| \mathbf{B} \|_{L^p} \,, \quad \forall \mathbf{A} : \| \mathbf{A} - \PP \|_{L^\infty}< \delta 
    \end{align}
    Furthermore,
    \begin{equation}\label{eq:linear-T}
       T(\XX) - T (\PP) = \int_0^1 D \Pi_{\mathcal{B}}(\PP  + t  \Pi_{\mathcal{V}} (\XX - \PP) ; \Pi_{\mathcal{V}} (\XX - \PP) )  dt\,, \quad \forall \,\|\XX - \PP \|_{L^\infty} < \delta \,. 
    \end{equation}
\end{lemma}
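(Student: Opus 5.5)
The plan is to reduce everything to the pointwise statement of \Cref{lemma:uniform-cont}, applied with $\XX$ there replaced by $\PP$. By \eqref{A1} applied to $\mathbf{I}+\PP$ we have $|(\mathbf{I}+\PP(x))^{-1}|\le C/\nu_1=:M$ for a.e.\ $x$, and $\PP(x)\in\mathcal{B}_x$. Every $\mathbf{A}$ with $\|\mathbf{A}-\PP\|_{L^\infty}<\delta<\nu_1/2$ then keeps $\mathbf{I}+\mathbf{A}(x)$ uniformly positive definite, with smallest eigenvalue at least $\nu_1/2$. This puts $\mathbf{A}(x)$ in the region where $\Pi_{\mathcal{B}_x}$ is smooth, possibly after shrinking $\delta$ so that $\delta\le\delta(M)$ from \Cref{lemma:uniform-cont}. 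In particular $D\Pi_{\mathcal{B}}(\mathbf{A};\mathbf{B})(x)=\dd_M\Pi_{\mathcal{B}_x}(\mathbf{A}(x);\mathbf{B}(x))$ is a measurable matrix field. It is linear in $\mathbf{B}$ and given by multiplication with the bounded matrix field $x\mapsto \dd_M\Pi_{\mathcal{B}_x}(\mathbf{A}(x))$. Measurability follows from continuity of $(x,Y)\mapsto \dd_M\Pi_{\mathcal{B}_x}(Y)$, using $f\in C^0$.

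The bound \eqref{eq:uniformLp} is then immediate. By \eqref{eq:boundedness-DM}, $\sup_x|\dd_f^i\dd_M^{j}\Pi_{\mathcal{B}_x}(\mathbf{A}(x))|\le C$ for the relevant orders, so pointwise $|\dd_M^j\dd_f^i D\Pi_{\mathcal{B}}(\mathbf{A};\mathbf{B})(x)|\le C|\mathbf{B}(x)|$. Taking the $L^p$ norm gives the claim for every $p\in[1,\infty]$, with $C$ independent of $\mathbf{A}$. For continuity of $\mathbf{A}\mapsto D\Pi_{\mathcal{B}}(\mathbf{A};\cdot)$ into $\mathcal{L}(L^p,L^p)$, fix $\mathbf{A}_0$ in the ball and use the same pointwise estimate:
\[
\|D\Pi_{\mathcal{B}}(\mathbf{A};\cdot)-D\Pi_{\mathcal{B}}(\mathbf{A}_0;\cdot)\|_{\mathcal{L}(L^p)}\le \sup_x|\dd_M\Pi_{\mathcal{B}_x}(\mathbf{A}(x))-\dd_M\Pi_{\mathcal{B}_x}(\mathbf{A}_0(x))|.
\]
I would control the right-hand side either by the mean value theorem together with the bound on $\dd_M^2$, giving $\le C\|\mathbf{A}-\mathbf{A}_0\|_{L^\infty}$, or by the uniform continuity statement \eqref{eq:continuity-DM}. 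The mean value route requires the segment between $\mathbf{A}_0$ and $\mathbf{A}$ to stay in the ball, which holds by convexity of the $L^\infty$ ball.

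For \eqref{eq:linear-T}, I would argue pointwise. Since $\PP\in\mathcal{V}$, we have $\Pi_{\mathcal{V}}\PP=\PP$ and $T(\PP)=\Pi_{\mathcal{B}}(\PP)=\PP$. Hence, by \eqref{eq:pointwise-T}, $T(\XX)(x)-T(\PP)(x)=\Pi_{\mathcal{B}_x}(\PP(x)+\YY(x))-\Pi_{\mathcal{B}_x}(\PP(x))$, where $\YY:=\Pi_{\mathcal{V}}(\XX-\PP)$. Applying the fundamental theorem of calculus along $t\mapsto \PP(x)+t\YY(x)$, which is valid since $\Pi_{\mathcal{B}_x}$ is $C^1$ on the segment, yields the integrand $\dd_M\Pi_{\mathcal{B}_x}(\PP(x)+t\YY(x);\YY(x))$. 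This is exactly the stated formula, with the $t$-integral understood pointwise a.e.\ or as a Bochner integral in $L^2$.

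\textbf{Main obstacle.} The segment must stay inside the $\delta$-ball, that is, $\|\YY\|_{L^\infty}<\delta$. The hypothesis only controls $\|\XX-\PP\|_{L^\infty}$, and $\Pi_{\mathcal{V}}$ (on $\mathbb{T}^2$ the Fourier multiplier $\xi\xi^T(\xi\xi^T:\cdot)/|\xi|^4$, a zeroth-order Calder\'on--Zygmund operator) is not bounded on $L^\infty$. I would first try to read the hypothesis as applying to $\Pi_{\mathcal{V}}\XX$, noting that $\Pi_{\mathcal{V}}\XX-\PP=\Pi_{\mathcal{V}}(\XX-\PP)$. That is the quantity actually controlled in the later application, where $H^{3/2}\hookrightarrow L^\infty$ is used. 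If the literal hypothesis is needed, I would instead add a regularity assumption on $\XX-\PP$, for example in $H^{1+s}$, which $\Pi_{\mathcal{V}}$ preserves and which embeds in $L^\infty$. In that case I would shrink $\delta$ so that the $L^\infty$ bound on $\YY$ holds.
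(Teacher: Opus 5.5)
Your proposal is correct and follows the paper's proof: you obtain \eqref{eq:uniformLp} and the continuity of $\mathbf{A}\mapsto D\Pi_{\mathcal{B}}(\mathbf{A};\cdot)$ by treating $D\Pi_{\mathcal{B}}(\mathbf{A};\cdot)$ as multiplication by the bounded field $\dd_M\Pi_{\mathcal{B}_x}(\mathbf{A}(x))$ and invoking \Cref{lemma:uniform-cont}, and you obtain \eqref{eq:linear-T} from the pointwise fundamental theorem of calculus along $t\mapsto \PP(x)+t\,\Pi_{\mathcal{V}}(\XX-\PP)(x)$ after using $\Pi_{\mathcal{V}}\PP=\PP=T(\PP)$. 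The obstacle you flag is genuine and the paper's proof passes over it silently, since $\|\XX-\PP\|_{L^\infty}<\delta$ does not control $\|\Pi_{\mathcal{V}}(\XX-\PP)\|_{L^\infty}$; \eqref{eq:linear-T} should therefore be read under smallness of $\Pi_{\mathcal{V}}(\XX-\PP)$ in $L^\infty$, which is exactly what \Cref{lemma:lipschitz} and \Cref{thm:main} provide through the boundedness of $\Pi_{\mathcal{V}}$ on $H^{3/2}(\mathbb{T}^2)$ and the embedding $H^{3/2}\hookrightarrow L^\infty$.
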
 
\begin{proof}
For any $x \in \mathbb{T}^2 $ fixed, by smoothness of $\Pi_{\mathcal{B}_x}$ with respect to the matrix variable, we have 
 $$ \Pi_{\mathcal{B}_x} (\XX (x)) - \Pi_{\mathcal{B}_x} (\YY (x)) = \int_0^1 \dd_M \Pi_{\mathcal{B}_x} (\YY (x) + t (\XX (x)  - \YY (x)) ;\XX (x) - \YY (x)) dt.$$
The continuity in $\mathbf{A} \in L^\infty$ of the map $D \Pi_{\mathcal{B}} (\mathbf{A} ; \mathbf{B}) $ follows from  the definition and \Cref{lemma:uniform-cont}
 \begin{align*}
     \sup_{\| \mathbf{B} \|_{L^p} \leq 1}\| D \Pi_{\mathcal{B}} (\mathbf{A} ; \mathbf{B}) - D \Pi_{\mathcal{B}} (\mathbf{A} ' ; \mathbf{B}) \|_{L^p} &= \sup_{\| \mathbf{B} \|_{L^p} \leq 1} \| (\dd_M \Pi_{\mathcal{B}_{x}} (\mathbf{A} (\cdot )) - \dd_M \Pi_{\mathcal{B}_x} (\mathbf{A}' (\cdot )) )   \mathbf{B}  (\cdot)\|_{L^p} 
     \\
        & \leq  \|  \dd_M \Pi_{\mathcal{B}_{x}} (\mathbf{A}(\cdot )) - \dd_M \Pi_{\mathcal{B}_x} (\mathbf{A}' (\cdot ))  \|_{L^\infty}  \to 0 
 \end{align*}
 as $\| \mathbf{A}' - \mathbf{A} \|_{L^\infty} \to 0$. The bound \eqref{eq:uniformLp} follows directly from  \Cref{lemma:uniform-cont}.
 
 Finally, by using \eqref{eq:pointwise-T} and observing that $\Pi_{\mathcal{V}} (\PP) = \PP$, $ \Pi_{\mathcal{V}} (\XX) - \PP =  \Pi_{\mathcal{V}} (\XX - \PP)$, we deduce that, for a.e. $x \in \mathbb{T}^2$,
 $$ T(\XX) (x) - T (\PP)(x) = \int_0^1 \dd_M  \Pi_{\mathcal{B}_x}(  \PP (x)  + t   \Pi_{\mathcal{V}} (\XX - \PP) (x);  \Pi_{\mathcal{V}} (\XX - \PP) (x))   dt\,,$$
which yields \eqref{eq:linear-T}.
\end{proof}

We state and prove the final analytical tool required for the convergence proof. In particular, the proof exploits structure of the domain $\mathbb{T}^2$, which is essential to obtain the $(1-\eps)$-Lipschitz continuity of $T$ in $H^1$ and $H^2$ up to a fixed error. In the following, we denote by $|\cdot |_{m}$ the seminorm in $H^m$.
\begin{lemma}\label{lemma:lipschitz}
    Let $\PP \in \mathcal{B}\cap \mathcal{V}\cap H^2(\mathbb{T}^2,\mathbb{S}^2)$ satisfy \eqref{A1}. Then, there exist $\eps, \delta, C>0$  such that  $T$ satisfies 
    \begin{align} \label{eq:lip-H1}
        | T(\XX) - T(\PP) |_{1} \leq (1 - \eps) | \XX - \PP |_{1} + C \,, \qquad \forall \XX \in H^2(\mathbb{T}^2,\mathbb{S}^2) \, : \| \XX - \PP \|_{H^{3/2} } < \delta \,,
    \end{align}
    and 
    \begin{align} \label{eq:lip-H2}
        | T(\XX) - T(\PP) |_{2} \leq (1 - \eps) | \XX - \PP |_{2} + C \,, \qquad \forall \XX \in H^2(\mathbb{T}^2,\mathbb{S}^2) \, : \| \XX - \PP \|_{H^{3/2} } < \delta \,.
    \end{align}
\end{lemma}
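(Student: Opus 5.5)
The plan is to differentiate the pointwise representation \eqref{eq:pointwise-T} in $x$, isolate the top-order term, and show that this term is the $L^2$ linearized operator $\Pi_{\ker(\cof(\mathbf I+\PP))}\circ\Pi_{\mathcal V_0}$ applied to derivatives of $\XX-\PP$; everything else goes into the constant $C$. On $\mathbb T^2$ the projection $\Pi_{\mathcal V}$ is a Fourier multiplier: $\widehat{\Pi_{\mathcal V}\XX}(k)=\frac{(k\otimes k):\hat\XX(k)}{|k|^4}\,k\otimes k$ for $k\neq0$. It therefore commutes with $\partial_{x_i}$ and is an orthogonal projection in every $\dot H^s$. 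This is exactly where the torus structure enters. Set $\YY:=\Pi_{\mathcal V}(\XX)$. Since $\Pi_{\mathcal V}$ does not increase $H^{3/2}$ norms and $H^{3/2}\hookrightarrow L^\infty$ in two dimensions, $\|\YY-\PP\|_{L^\infty}\lesssim\delta$. For $\delta$ small, \Cref{lemma:uniform-cont} (with $M$ coming from \eqref{A1}) then gives uniform bounds on $\dd_f^i\dd_M^j\Pi_{\mathcal B_x}(\YY(x))$ and makes $\dd_M\Pi_{\mathcal B_x}(\YY(x))$ uniformly $\eps'$-close to $\dd_M\Pi_{\mathcal B_x}(\PP(x))$.

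\emph{$H^1$ estimate.} By the chain rule,
\[
\partial_i T(\XX)=\dd_M\Pi_{\mathcal B_x}(\YY)[\partial_i\YY]+\dd_f\Pi_{\mathcal B_x}(\YY)\,\partial_i f .
\]
The same identity holds for $\PP$. Subtracting the two, I write
\[
\partial_i(T(\XX)-T(\PP))=\dd_M\Pi_{\mathcal B_x}(\PP)[\Pi_{\mathcal V}\partial_i(\XX-\PP)]+R_i .
\]
The remainder $R_i$ collects $(\dd_M\Pi(\YY)-\dd_M\Pi(\PP))[\partial_i\YY]$ and the $f$-terms. Its $L^2$ norm is at most $\eps'|\XX-\PP|_1+C(\|\PP\|_{H^2},\|f\|_{H^2})$.

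Since $\PP\in\mathcal B$, we have $d=0$ in \eqref{eq:explicit-DM}, so $\dd_M\Pi_{\mathcal B_x}(\PP(x))=\Pi_{\ker\cof(\mathbf I+\PP(x))}$. The main term is therefore $\Pi_{\ker(\cof(\mathbf I+\PP))}\Pi_{\mathcal V_0}$ applied to $\partial_i(\XX-\PP)$. By \Cref{rem:transversality} (the $m=0$ case of the transversality argument in \Cref{thm:convergenceHm}, transferred to the torus, where solvability of $\cof(\mathbf I+\PP):D^2v=\phi$ holds modulo constants) this composition has $L^2$ operator norm $\rho_0<1$. Choosing $\eps'$ with $\rho_0+\eps'\le1-\eps$ gives \eqref{eq:lip-H1}.

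\emph{$H^2$ estimate.} Differentiating once more gives the main term $\dd_M\Pi(\YY)[\partial_{ij}\YY]$, handled exactly as above. The additional terms are $\dd_M^2\Pi(\YY)[\partial_i\YY,\partial_j\YY]$, mixed terms $\dd_f\dd_M\Pi\,\partial_jf\,\partial_i\YY$, and pure $f$-terms. The quadratic term is the main obstacle. I bound it by $\|\partial\YY\|_{L^4}^2\lesssim\|\YY\|_{H^{3/2}}^2$, using $H^{1/2}\hookrightarrow L^4$, and this is bounded by $(\delta+\|\PP\|_{H^{3/2}})^2$, i.e.\ a constant. The assumption $\|\XX-\PP\|_{H^{3/2}}<\delta$ is designed precisely to absorb this product. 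I split $\partial\YY=\partial(\YY-\PP)+\partial\PP$ when needed, and use $\PP\in H^2$ and $f\in H^2$ (implicitly required, as $\det(\mathbf I+\PP)=f$). The mixed terms are bounded similarly by $\|\partial f\|_{L^4}\|\partial\YY\|_{L^4}$. All of these land in $C$, yielding \eqref{eq:lip-H2}. Whether the $f$-derivative bounds from \Cref{lemma:uniform-cont} suffice for these $L^4$ products is the point I would check most carefully.
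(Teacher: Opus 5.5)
Your proposal is correct and follows essentially the same route as the paper: the pointwise chain rule, commutation of $\Pi_{\mathcal V}$ with $\partial_i$ on $\mathbb T^2$, the transversality bound $\rho_0<1$ together with $L^\infty$-continuity of $\dd_M\Pi_{\mathcal B_x}$ (via $H^{3/2}\hookrightarrow L^\infty$) for the top-order term, and $H^{3/2}\hookrightarrow W^{1,4}$ to put the quadratic and $f$-terms into $C$; the point you flagged is fine, since the derivatives of $\Pi_{\mathcal B_x}$ are bounded in $L^\infty$ and H\"older's inequality handles the products. The only difference is cosmetic: you subtract the chain-rule identities for $T(\XX)$ and $T(\PP)$ and freeze the linearization at $\PP$, whereas the paper differentiates the mean-value formula \eqref{eq:linear-T} and applies \eqref{eq:strict-contraction} at the intermediate base points.
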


\begin{proof}

With the same notation used above, we denote by $\partial_i $ the $i$-th derivative with respect to $x_i$, $\dd_M $ the derivative of $\Pi_{\mathcal{B}_x} (M)$ with respect to the matrix $M$ and $\dd_f $ the derivative of $\Pi_{\mathcal{B}_x}$ with respect to $f$. For any $\XX$ satisfying $\|\XX-\PP\|_{L^\infty}<\delta$, applying the chain rule and \Cref{lemma:frechet-fake} yields
    \begin{align} \label{eq:formula-x-i}
         \partial_i D\Pi_{\mathcal{B}} (\XX ; \YY) =D \Pi_{\mathcal{B}} (\XX ; \partial_i \YY)   + \dd_M D \Pi_{\mathcal{B}} (\XX ; \YY) \partial_i \XX + \dd_f D \Pi_{\mathcal{B}} (\XX; \YY) \partial_i f.
    \end{align}
    Using the regularity of $\Pi_{\mathcal{B}_x}$ by \Cref{lemma:frechet-fake}
    $$ \| \dd_M D\Pi_{\mathcal{B}} (\XX ; \YY) \|_{L^\infty}  \leq C \| \YY \|_{L^\infty} \qquad \| \dd_f D\Pi_{\mathcal{B}} (\XX; \YY) \| \leq C \| \YY \| $$
uniformly for all $\XX$ such that $\|\XX-\PP\|_{L^\infty}<\delta$. Since $f\in C^2$, this implies
\begin{equation}\label{eq:first-derivative-DPi}
\|\partial_i D\Pi_{\mathcal{B}}(\XX;\YY)\|
\le
\|D\Pi_{\mathcal{B}}(\XX;\partial_i\YY)\|
+ C\|\YY\|_{L^\infty}\|\partial_i\XX\|
+ C \| \YY \|.
\end{equation}
Differentiating \eqref{eq:formula-x-i} once more with respect to $x_j$ and applying the chain rule again, we obtain an expression for $\partial_j\partial_i D\Pi_{\mathcal{B}}(\XX;\YY)$ consisting of terms involving derivatives of $\YY$, $\XX$, and $f$ up to second order, as well as products of first derivatives. Using the bounds on the higher-order derivatives of \Cref{lemma:frechet-fake}, the assumption $\|f\|_{C^2}\lesssim 1$, H\"older's inequality, and the condition $\|\XX\|_{W^{1,4}}\le C$ since $H^{3/2} \hookrightarrow W^{1,4}$ and $\| \XX - \PP \|_{H^{3/2}} < \delta$, we deduce
\begin{equation}\label{eq:second-derivative-DPi}
\|\partial_i\partial_j D\Pi_{\mathcal{B}}(\XX;\YY)\|
\le
\|D\Pi_{\mathcal{B}}(\XX;\partial_i\partial_j\YY)\|
+ C\|\YY\|_{L^\infty}\|\partial_i\partial_j\XX\|
+ C\|\YY\|_{W^{1,4}}\,.
\end{equation}
To prove \eqref{eq:lip-H1} and \eqref{eq:lip-H2} we exploit \eqref{eq:linear-T}. Differentiating it with respect to $x_i$ yields
\begin{equation}\label{eq:chain-rule-T}
\begin{aligned}
\partial_i  T(\XX ) - \partial_i T(\PP)
&= \int_0^1  D \Pi_{\mathcal{B}}
\big(\PP  + t  \Pi_{\mathcal{V}} (\XX - \PP) ;
\partial_i \Pi_{\mathcal{V}} (\XX -\PP) \big) \, dt
\\
&\quad + \int_0^1  \dd_M D \Pi_{\mathcal{B}}
\big(\PP  + t  \Pi_{\mathcal{V}} (\XX - \PP) ;
\Pi_{\mathcal{V}} (\XX -\PP) \big)
\, \partial_i ( \PP  + t  \Pi_{\mathcal{V}} (\XX - \PP)) \, dt
\\
&\quad + \int_0^1  \dd_f D \Pi_{\mathcal{B}}
\big(\PP  + t  \Pi_{\mathcal{V}} (\XX - \PP) ;
\Pi_{\mathcal{V}} (\XX -\PP) \big)
\, \partial_i f \, dt .
\end{aligned}
\end{equation}
We observe that \( D\Pi_{\mathcal{B}}(\PP;\Pi_{\mathcal V}(\cdot))=\Pi_{\ker(\cof(\PP))}\circ\Pi_{\mathcal V},\)
and therefore, by \Cref{rem:transversality},
\[
\big\|D\Pi_{\mathcal{B}}(\PP;\Pi_{\mathcal V}(\cdot))\big\|_{\mathcal{L}(L^2)}<1.
\]
By continuity of $D\Pi_{\mathcal{B}}$ at $\PP$, see \Cref{lemma:frechet-fake}, there exist $\varepsilon>0$ and $\delta>0$ such that
\begin{equation}\label{eq:strict-contraction}
\big\|D\Pi_{\mathcal{B}}(\YY;\Pi_{\mathcal V}(\cdot))\big\|_{\mathcal{L}(L^2)}
\le 1-2\varepsilon,
\qquad
\forall\,\|\YY-\PP\|_{L^\infty}<\delta.
\end{equation}
Moreover, $\|\Pi_{\mathcal{V}}\|_{\mathcal{L}(L^2)}=1$ and, on
$\mathbb{T}^2$, it also holds true that (see \Cref{remark:projection})
\begin{align} \label{eq:lipsc-1-proj}
    \|\Pi_{\mathcal{V}}\|_{\mathcal{L}(H^m)}=1, \quad m\geq 0.
\end{align}
Using these properties, \eqref{eq:first-derivative-DPi} and \eqref{eq:uniformLp}, we deduce that
    \begin{align*}
        \| \partial_i T (\XX) &- \partial_i T (\PP) \|  = \| \partial_i \int_0^1   D\Pi_{\mathcal{B}} (\PP + t  \Pi_{ \mathcal{V}} (\XX - \PP); \Pi_{ \mathcal{V}} (\XX - \PP)) dt \| 
        \\
        & \leq (1- 2 \eps ) \| \partial_i \Pi_{ \mathcal{V}} ( \XX -  \PP) \| + C \|\Pi_{ \mathcal{V}} ( \XX - \PP ) \|_{L^\infty} (\| \partial_i \Pi_{ \mathcal{V}} (\XX-\PP)\|  + \| \partial_i  \PP \| ) + C \|\Pi_{ \mathcal{V}} ( \XX - \PP ) \|
        \\
        & \leq (1 -2 \eps ) \| \partial_i \XX - \partial_i \PP \| + C \| \XX - \PP \|_{H^{3/2}} (\| \partial_i \XX - \partial_i \PP\|  + 2 \| \partial_i \PP \| ) + C \|\Pi_{ \mathcal{V}} ( \XX - \PP ) \|
        \\
        & \leq (1- \eps)\| \partial_i \XX - \partial_i \PP \| +2 C\,,
    \end{align*}
    where in the last we have used  $\| \XX - \PP \|_{H^{3/2}} < \delta$, $\| \XX \|_{H^{1}} \leq \| \PP \|_{H^1} + \delta \leq C$ and $C \delta < \eps$. Therefore, summing over $i$, we obtain \eqref{eq:lip-H1}. 

Differentiating \eqref{eq:linear-T} twice and using
\eqref{eq:second-derivative-DPi}, \eqref{eq:lipsc-1-proj}, together with the embedding
$H^{3/2}\hookrightarrow W^{1,4}$ and $H^{3/2}\hookrightarrow L^\infty$, we obtain
\begin{align*}
\|\partial_i\partial_j T(\XX)-\partial_i\partial_j T(\PP)\|
&\le (1-2\varepsilon)\|\partial_i\partial_j(\XX-\PP)\|
\\
&\quad
+ C\|\XX-\PP\|_{H^{3/2}}
\big(\|\partial_i\partial_j(\XX-\PP)\|+\|\partial_i\partial_j\PP\|\big)
+ C\|\XX-\PP\|_{H^{3/2}}\,.
\end{align*}
Choosing $\delta$ such that $C\delta<\varepsilon$ and summing over $i,j$
yields \eqref{eq:lip-H2} up to not relabeling the constant $C$.

\end{proof}
\begin{remark} \label{remark:projection}
If $\Omega\neq \mathbb{T}^2$ is a bounded convex domain, one can still obtain a stability estimate but would lose the contraction property in $H^m$ for $m>0$: 
\begin{align*}
    | T(\XX) - T(\PP) |_{m} \leq (1 - \eps)C_{m,\Omega}| \XX - \PP |_{m} + C \,, \qquad \forall \XX \in H^2(\mathbb{T}^2,\mathbb{S}^2) \, : \| \XX - \PP \|_{H^{3/2} } < \delta \,.
\end{align*}
where $C_{m,\Omega}= \sup_{\| \PP \|_m \leq 1}\|\Pi_{\mathcal{V}} (\PP)\|_{m}$ depends on the elliptic regularity constant and on the trace operator.  Indeed on the torus the projection $D^2 u = \Pi_{\mathcal{V}} (\PP)$ is the zero average solution to 
$$ \int D^2 u : D^2 v = \int \PP : D^2 v \qquad \forall v \in H^2$$ for which it follows by Cauchy-Schwarz $\| D^2 u \| \leq \| \PP \|$ and the choice $v =u$. We observe that $u $ also solves 
$$ \int D^\alpha D^2 u : D^2 v = \int D^\alpha  \PP : D^2 v \qquad \forall v \in H^2$$ 
for any multi-index $\alpha$. Then 
one could take $v =D^\alpha u$ to conclude also $\| D^\alpha u \| \leq \| D^\alpha \PP \|$ from which $\|u \|_m \leq \| \PP \|_m$ for any $m \geq 0$. On a bounded domain $\Omega \subset \mathbb{R}^2$, the choice $v = D^\alpha u$ is in general not admissible because of boundary conditions. In particular, the resulting inequality contains boundary terms that require control via trace inequalities and elliptic regularity theory.

\end{remark}

\subsection{Local convergence of \eqref{eq:algorithm-torus} on $\mathbb{T}^2$}
We now establish local linear convergence of the splitting method \eqref{eq:algorithm-torus} on $\mathbb{T}^2$. Assuming that the exact solution is sufficiently regular and uniformly elliptic, we prove that if the initial guess $\PP^0$ is close enough in $H^2$ to the Hessian matrix  $\PP =D^2u $ of the solution to the Monge-Amp\`ere equation, then the iteration of the splitting method \eqref{eq:algorithm-torus} contracts in $L^2$ and hence the sequence $D^2 u^n$ defined in \eqref{eq:algorithm-torus} converges in $L^2$ at a linear rate. This yields convergence in $H^2$ for $u^n$. Furthermore, the convergence of the sequence $D^2 u^n$ also holds true in $H^{3/2}$.

\begin{theorem}\label{thm:main}
Assume $f\in C^2(\mathbb{T}^2)$ and let $u\in H^4(\mathbb{T}^2)$ be a solution of \eqref{eq:torus}. Set $\mathbf{P}:=D^2u$ and assume that $\mathbf{I}+\mathbf{P}$ satisfies \eqref{A1}. Then there exist constants $\delta>0$ and $\rho \in(0,1)$ such that, for any initial guess $\mathbf{P}^0\in H^2(\mathbb{T}^2,\mathbb{S}^2)$ with $\|\mathbf{P}^0-\mathbf{P}\|_{2}<\delta$, the sequence $(\mathbf{P}^n)_{n\ge0}$ generated by \eqref{eq:splitting} satisfies
\[
\|\mathbf{P}^n-\mathbf{P}\|\le  \rho^n \|\mathbf{P}^0-\mathbf{P}\|\qquad\text{for all }n\ge0.
\]
Hence, $\mathbf{P}^n\to \mathbf{P}$ in $L^2(\mathbb{T}^2,\mathbb{S}^2)$ at a linear rate. Moreover, $\PP^n\to \PP$ also in $H^s(\mathbb{T}^2, \mathbb{S}^2)$, $s\leq \frac{3}{2}$.
\end{theorem}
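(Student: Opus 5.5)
We argue by induction, keeping simultaneously an $L^2$ contraction and a uniform $H^2$ bound along the iteration. The $H^2$ bound, via interpolation with the $L^2$ smallness, keeps the iterates $H^{3/2}$-close to $\PP$. That closeness is what makes \Cref{lemma:frechet-fake} and \Cref{lemma:lipschitz} applicable at every step.

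\emph{Step 1: local $L^2$ contraction.}
By \Cref{rem:transversality} and the continuity statement in \Cref{lemma:frechet-fake}, there are $\delta_0>0$ and $\eps>0$ such that \eqref{eq:strict-contraction} holds for every $\YY$ with $\|\YY-\PP\|_{L^\infty}<\delta_0$. Since $\|\Pi_{\mathcal V}\|_{\mathcal L(H^{3/2})}\le 1$ by \Cref{remark:projection} and $H^{3/2}\hookrightarrow L^\infty$, the condition $\|\XX-\PP\|_{H^{3/2}}<\delta_1$ implies that every point $\PP+t\Pi_{\mathcal V}(\XX-\PP)$, $t\in[0,1]$, lies in this $L^\infty$-ball. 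Applying \eqref{eq:linear-T} and taking $L^2$ norms under the integral then gives
\[
\|T(\XX)-\PP\|\le (1-2\eps)\|\Pi_{\mathcal V}(\XX-\PP)\|\le \rho\,\|\XX-\PP\|,\qquad \rho:=1-2\eps .
\]
Here we use $T(\PP)=\PP$, which holds because $\PP\in\mathcal B\cap\mathcal V$.

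\emph{Step 2: uniform $H^2$ bound.}
Apply \eqref{eq:lip-H1} and \eqref{eq:lip-H2} to $\XX=\PP^n$, assuming $\|\PP^n-\PP\|_{H^{3/2}}<\delta$. Writing $a_n:=|\PP^n-\PP|_1+|\PP^n-\PP|_2$, this yields $a_{n+1}\le(1-\eps)a_n+2C$. Hence, for all $n$,
\[
a_n\le \max\{a_0,\,2C/\eps\}=:K .
\]
Together with the $L^2$ bound from Step 1, this gives $\|\PP^n-\PP\|_2\le K'$ uniformly in $n$. The constant $K'$ depends only on $\PP$ and on the constants of \Cref{lemma:lipschitz}, provided $\|\PP^0-\PP\|_2<\delta$. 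Note that $\PP\in H^2$ because $u\in H^4$. We also need $\PP^n\in H^2$ for every $n$. This is propagated by the pointwise chain-rule formulas from the proof of \Cref{lemma:lipschitz}: $\Pi_{\mathcal V}$ preserves $H^2$, and $\Pi_{\mathcal B_x}$ is smooth with bounded derivatives near $\PP$ by \Cref{lemma:uniform-cont}, using $f\in C^2$.

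\emph{Step 3: closing the induction.}
The induction hypothesis at step $n$ is
\[
\|\PP^k-\PP\|\le\rho^k\|\PP^0-\PP\|,\qquad \|\PP^k-\PP\|_2\le K' \qquad (k\le n).
\]
Interpolating between $L^2$ and $H^2$ gives
\[
\|\PP^n-\PP\|_{H^{3/2}}\le C\,\|\PP^n-\PP\|^{1/4}\,\|\PP^n-\PP\|_2^{3/4}\le C K'^{3/4}\delta^{1/4}.
\]
We choose $\delta$ so small that the right-hand side is below $\min\{\delta_1,\delta\}$. Steps 1 and 2 then apply to $\PP^n$ and propagate both inequalities to $n+1$. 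The $H^s$ convergence for $s\le 3/2$ follows from the same interpolation inequality: the $L^2$ factor decays like $\rho^{n(1-s/2)}$ while the $H^2$ factor stays bounded.

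\emph{Main obstacle.}
The delicate point is that the additive constant $C$ in \Cref{lemma:lipschitz} does not shrink with $\delta$. Consequently $K'$ cannot be made small, so a naive argument would make the smallness threshold for $H^{3/2}$-closeness depend on the bound it is supposed to produce. We break this circularity by fixing the order of constants. First fix $\eps,\delta_1$ and the constants $C$ of \Cref{lemma:lipschitz}, working in its $H^{3/2}$-neighbourhood. Then $K'\lesssim 1+C/\eps$ is independent of $\delta$ once $\delta\le 1$. Only after that do we choose $\delta$, so that the factor $\delta^{1/4}$ in the interpolation defeats $K'^{3/4}$. If the constant $C$ in \Cref{lemma:lipschitz} secretly depended on $\delta$, we would go back into its proof and use the fact that the error terms are bounded by $\|\PP\|_{H^2}$ and $\|f\|_{C^2}$ alone.
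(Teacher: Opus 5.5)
Your proposal is correct and follows essentially the same route as the paper: an $L^2$ contraction from \eqref{eq:linear-T} and transversality, $H^2$ control from \Cref{lemma:lipschitz}, and $L^2$--$H^2$ interpolation to keep every iterate $H^{3/2}$-close to $\PP$ by induction. The only real difference is that you keep the factor $(1-\eps)$ and get the uniform bound $a_n\le\max\{a_0,2C/\eps\}$, whereas the paper drops it, accepts $\|\PP^n-\PP\|_2\le C(n+1)$, and absorbs this growth through $\sup_n\rho^{n/4}(n+1)^{3/4}<\infty$; your version is slightly cleaner and also gives $H^s$ rates $\rho^{n(1-s/2)}$ for every $s<2$, but in Step 3 you should give the neighbourhood radius of \Cref{lemma:lipschitz} a name distinct from the initial-data $\delta$.
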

\begin{proof} 
Firstly, we recall that, by  \Cref{rem:transversality} and the definition of $D \Pi_{\mathcal{B}}$ in \Cref{lemma:frechet-fake} and \eqref{eq:explicit-DM}, we have 
\begin{align}\label{eq:constant<1}
    \|D \Pi_{\mathcal{B}}(\PP ;  \Pi_{\mathcal{V}}( \cdot))\|_{\mathcal{L}(L^2)}= \| \Pi_{\ker(\cof(  \mathbf{I}+\PP))} \circ \Pi_{\mathcal{V}} \|_{\mathcal{L}(L^2)}= \rho_0 <1.
\end{align}
Denoting by $C>0$ the constant of the Sobolev embedding $H^{3/2} \hookrightarrow L^\infty$ meaning $\| \cdot \|_{L^\infty} \leq C \| \cdot \|_{H^{3/2}}$, we observe that 
$$ K= \{ \XX \in L^2(\mathbb{T}^2,\mathbb{S}^2) : \| \XX \|_{H^{3/2}} \leq  \delta^{1/8} \}$$
is a compact space in $L^2(\mathbb{T}^2,\mathbb{S}^2)$ with respect to the $L^2$ topology
and for $\delta>0$ sufficiently small by \eqref{eq:constant<1} and \Cref{lemma:frechet-fake} that says that $\YY \in L^\infty \to D \Pi_{\mathcal{B}} (\YY; \cdot) \in \mathcal{L} (L^2,L^2) $ is continuous, we deduce  
\begin{align*}
\| T(\XX) - T (\PP)\|  & \leq \int_0^1 \| D \Pi_{\mathcal{B}}(\PP  + t  \Pi_{\mathcal{V}} (\XX - \PP) ; \Pi_{\mathcal{V}} (\XX - \PP) ) \|  dt
\\
& \leq (\rho_0 + \eps) \| \Pi_{\mathcal{V}} (\XX - \PP)\| \leq (\rho_0 + \eps  ) \|\XX - \PP \| 
\end{align*}
where we have used also  $\| \Pi_{\mathcal{V}} \|_{\mathcal{L}(L^2)}\leq 1$.
In particular, if we choose $\delta >0$ small enough in the definition of $K$ depending on $\rho < 1$,  we can impose that ${\rho} = \rho_0 + \eps <1 $.

We now prove that for $\delta >0$ sufficiently small $\PP^n - \PP \in K$ for all $n \in \NN$. Indeed, for $n=0$, it is given by the assumption. By induction, we  assume it to be true for any $k \leq n-1$ and we claim it for $k=n$. First we observe that
\begin{align*}
    \| \PP^n -  \PP \| = \| T (\PP^{n-1}) - T (\PP) \| = \| T (\PP + (\PP^{n-1} - \PP)) - T (\PP) \| \leq  \rho \| \PP^{n-1} - \PP \| ,
\end{align*}
hence, 
\begin{align} \label{eq:proof-conv-L2}
    \| \PP^n -  \PP \| \leq \rho^n \| \PP^0 - \PP \| \,.
\end{align}
Furthermore, by \eqref{eq:lip-H1} and \eqref{eq:lip-H2} in \Cref{lemma:lipschitz}, if $\delta >0$ is sufficiently small,  we have
\begin{align*}
    \| \PP^n -  \PP \|_{2} = \| T (\PP^{n-1}) - T (\PP) \|_{2} & \leq  \| \PP^{n-1} - \PP \|_{2} + C.
\end{align*}
Iterating this inequality, we deduce
\begin{align} \label{eq:proof1}
    \| \PP^n -  \PP \|_{2} \leq  \| \PP^0 - \PP \|_{2} + Cn \leq C(n+1) \,.
\end{align}
Using the interpolation inequality $\| \cdot \|_{H^{3/2}} \leq  \| \cdot \|_{L^2}^{1/4} \| \cdot \|_{H^{2}}^{3/4}$, \eqref{eq:proof-conv-L2} and \eqref{eq:proof1} yield
\begin{align}\label{eq:convh32}
     \| \PP^n - \PP \|_{H^{3/2}} & \leq  \| \PP^n - \PP \|^{\frac{1}{4}} \| \PP^n - \PP \|_{2}^{\frac{3}{4}} \notag 
    \\
    & \leq   \rho^{n/4}\| \PP^0 - \PP \|^{\frac{1}{4}} C^{3/4} (n+1)^{3/4}  \notag
    \\
    & \leq  C^{3/4} \rho^{n/4}  (n+1)^{3/4} \delta^{1/4}
    \,,
\end{align}
where $\|\PP^0 - \PP \|\leq \|\PP^0 - \PP \|_{2}<\delta$. Since $\lim_{n\to \infty} \rho^{n/4} (n+1)^{3/4} =0$, we can choose $\delta >0$ sufficiently small so that 
$C^{3/4} \rho^{n/4}  (n+1)^{3/4}  \leq \delta^{- 1/8} $ for all $n \in \NN$. Therefore, we deduce that 
$$ \| \PP^n - \PP \|_{H^{3/2}} \leq \delta^{1/8} \,, \quad \forall n \in \NN\,.$$
 Thus, $\PP^n - \PP \in K$ for all $n \in \NN$.
In particular, we can iterate the bound \eqref{eq:proof-conv-L2} for all $n \in \NN$ and prove that  
$$\| \PP^n - \PP \| \leq \rho^n \| \PP^0 - \PP \| \qquad \forall n \in \NN \,.$$
The bound in \eqref{eq:convh32} proves the convergence in $H^{3/2}(\mathbb{T}^2, \mathbb{S}^2)$.
\end{proof}
\begin{remark}[Quantitative rate bound]
Under the assumptions of \Cref{thm:main} one can bound $\rho$ in \eqref{eq:constant<1} in terms of the eigenvalues of $\mathbf{I}+\PP$. If $\mathbf{I}+\PP$ satisfies \eqref{A1}, then
\begin{equation}\label{eq:rho_estimate}
\|\Pi_{\ker(\cof(\mathbf{I}+\PP ))} \circ \Pi_{\mathcal{V}_0}\|_{\mathcal{L}(L^2)}
\leq \frac{\kappa}{\sqrt{1+\kappa^2}},
\qquad \kappa := \nu_2/\nu_1.
\end{equation}
Hence, larger values of $\kappa$ (i.e.\ poorer conditioning of $\mathbf{I}+\PP$) lead to slower convergence, as expected. We briefly prove it. By definition, it holds
\begin{align*}
\|\Pi_{\ker(\cof(\mathbf{I}+\PP ))} \circ\Pi_{\mathcal{V}_0}\|_{\mathcal{L}(L^2)}^2= &\sup_{\substack{D^2 w\in\mathcal{V}\\ \|D^2w\|=1}} \int_{\Omega}\big(D^2w - \frac{D^2w:\cof(\mathbf{I}+\PP)}{|\cof(\mathbf{I}+\PP)|^2}\cof(\mathbf{I}+\PP)\big)^2    \\
=& 1 - \inf_{\substack{D^2 w\in\mathcal{V}\\ \|D^2w\|=1}} \int_{\Omega}\frac{(D^2w:\cof(\mathbf{I}+\PP))^2}{|\cof(\mathbf{I}+\PP)|^2}.
\end{align*}
Now, let $\lambda_{\rm max}=\lambda_{\rm max}(x),\lambda_{\rm min}=\lambda_{\rm min}(x)$ be the eigenvalues of $(\mathbf{I}+\PP)(x)$, then 
$$\frac{(D^2w:\cof(\mathbf{I}+\PP))^2}{|\cof(\mathbf{I}+\PP)|^2} \geq \frac{(\lambda_{\rm min}\Delta w)^2}{\lambda_{\rm min}^2+\lambda_{\rm max}^2}.$$
By exploiting the Miranda-Talenti inequality and \eqref{A1}, one concludes
$$\|\Pi_{\ker(\cof(\mathbf{I}+\PP ))} \circ\Pi_{\mathcal{V}_0}\|_{\mathcal{L}(L^2)}^2\leq 1  -\essinf_{x\in\Omega}\frac{\lambda_{\rm min}^2}{\lambda_{\rm min}^2+\lambda_{\rm max}^2} \leq 1 - \frac{1}{1 + (\nu_2/\nu_1)^2}.$$
\end{remark}
\subsection{Further discussion}
We conclude by noting that, on a bounded convex domain \(\Omega\), we expect that a convergence result analogous to \Cref{thm:main} can be obtained by considering a \emph{hybrid} version of the iteration: one keeps the \(L^2\)-projection onto the nonlinear constraint set \(\mathcal B\), but replaces the biharmonic step by an \(H^2\)-projection onto the affine subspace \(\mathcal V_g^m\), namely $D^2u^n = \Pi^{(m)}_{\mathcal{V}_g^m}$. In this setting, the projection onto $\mathcal{V}^m_g$ would provide the additional regularity needed to re-establish the key $(1-\eps)$-Lipschitz property of \Cref{lemma:lipschitz}, yielding local convergence under the same type of regularity assumptions. From a computational viewpoint, this hybrid scheme would be significantly more expensive than the original formulation and would not be naturally compatible with standard $\mathbb{P}_1$ finite elements. Nevertheless, it remains less demanding than the fully high-regularity variant in which both projections are taken in $H^m$, and it can be formulated also in higher dimensions (e.g.\ $d=3,4$). 
\section{Numerical examples}\label{sec:num}
In this section we present numerical experiments on $\mathbb{T}^2$. Although the numerical discretization is not the focus of this work, we include these results because, to the best of our knowledge, the proposed method has not previously been tested in the periodic setting. The purpose of this section is solely to assess the validity of \Cref{thm:main}, rather than to discuss the numerical discretization of \eqref{eq:splitting}. We refer to \cite{peruso} for the details of the numerical approximation of \eqref{eq:biharmonic} and \eqref{eq:firstmin}. Specifically, we employ $\mathbb{P}_1$ finite elements to approximate the biharmonic problem \eqref{eq:biharmonic}, yielding an approximation $u_h$ (and $\Delta u_h$ via the decoupling procedure described in \cite{peruso}), where $h$ denotes the mesh size. By post-processing $\nabla u_h$ with the recovery operator $G_h$, we obtain a discrete approximation of the Hessian of $u$, denoted by $D_h^2u_h$. The minimization problem \eqref{eq:firstmin} is solved as in \cite{glowinski}. 

We represent $\mathbb{T}^2$ as $[0,1]^2$ endowed with periodic boundary conditions. We consider a manufactured solution given by
\[
u(x,y) = \varepsilon \sin(2\pi x)\sin(2\pi y).
\]
For this choice, the eigenvalues of $\mathbf{I} + \mathbf{P}=\mathbf{I} + D^2u$ belong to $[1 - 4\pi^2\eps, 1 + 4\pi^2\eps]$ and therefore \eqref{A1} holds provided that $\eps < 1/4\pi^2\approx 0.025$. 

In \Cref{fig:small} we report the results for $\eps = 0.002$. In particular, \Cref{fig:small1} shows the error in $u$ with respect to the $L^2$ norm, namely $\|u-u_h^n\|_{L^2}$, as a function of the iteration index $n$ for several mesh sizes $h$. After approximately $10$ iterations the error reaches a plateau. In \Cref{fig:small2} we plot the corresponding $L^2$ error for the Hessian reconstruction, $\|D^2u-D_h^2u_h^n\|_{L^2}$, and observe the same behavior, in agreement with \Cref{thm:main}. Finally, \Cref{fig:small3} reports the errors in several norms as functions of the mesh size $h$, indicating convergence also with respect to mesh refinement. The analogous plots for $\eps = 0.02$ are shown in \Cref{fig:big}, leading to the same qualitative conclusions. We additionally observe that convergence in $n$ is significantly slower when $\eps=0.02$ compared to $\eps=0.002$. This is consistent with the fact that, as $\eps$ increases, the smallest eigenvalue $\nu_1$ of $\mathbf{I}+D^2u$ ($\mathbf{I} + \PP$) decreases and the ratio $\kappa=\nu_2/\nu_1$ increases. This behavior agrees with the estimate \eqref{eq:rho_estimate}.

\begin{figure}[tbp]
\centering
\begin{subfigure}[t]{0.325\linewidth}
\centering
\includegraphics[width=0.98\linewidth]{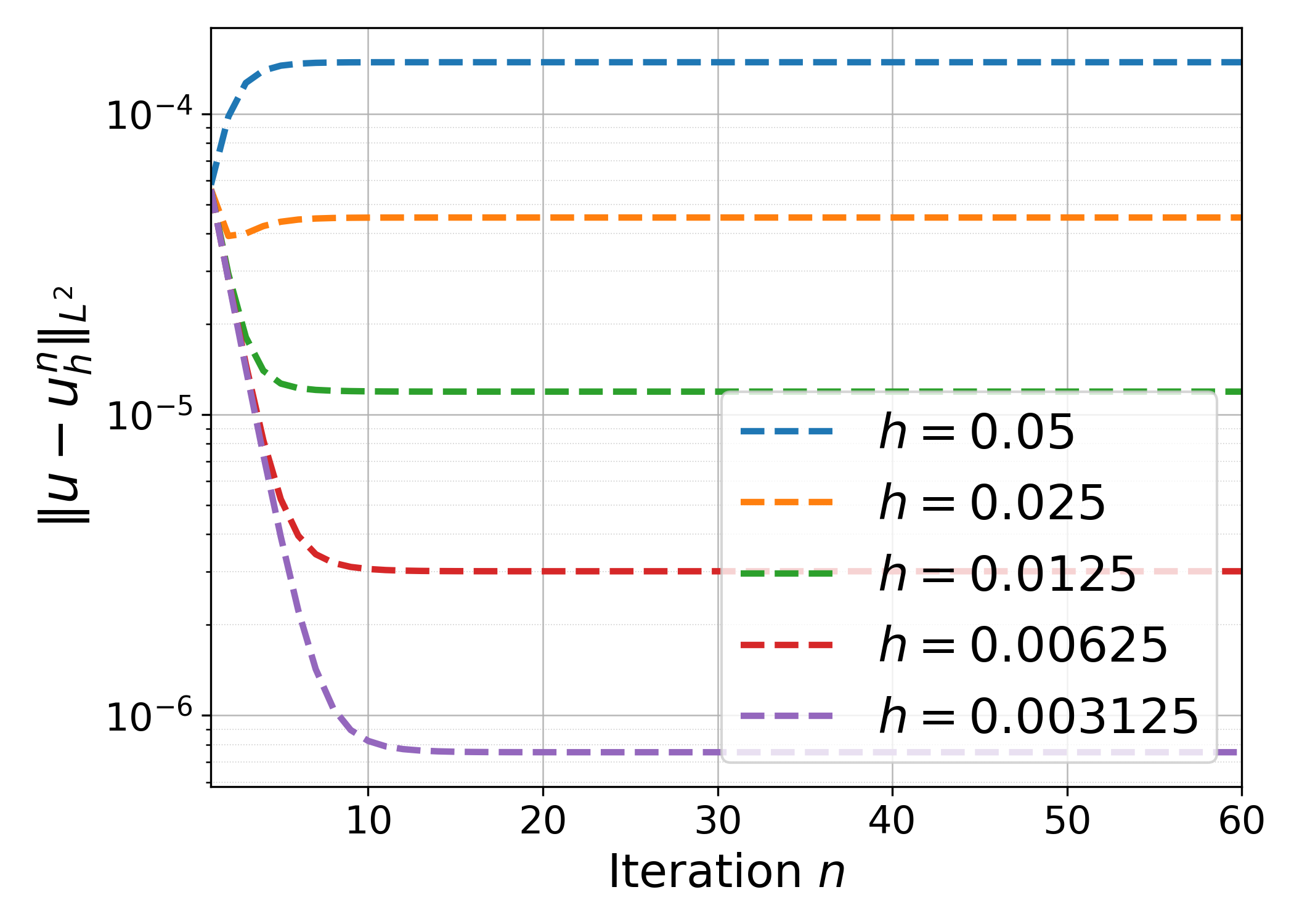}
\caption{$L^2$ error vs. iteration.}
\label{fig:small1}
\end{subfigure}
\begin{subfigure}[t]{0.325\linewidth}
\centering
\includegraphics[width=0.98\linewidth]{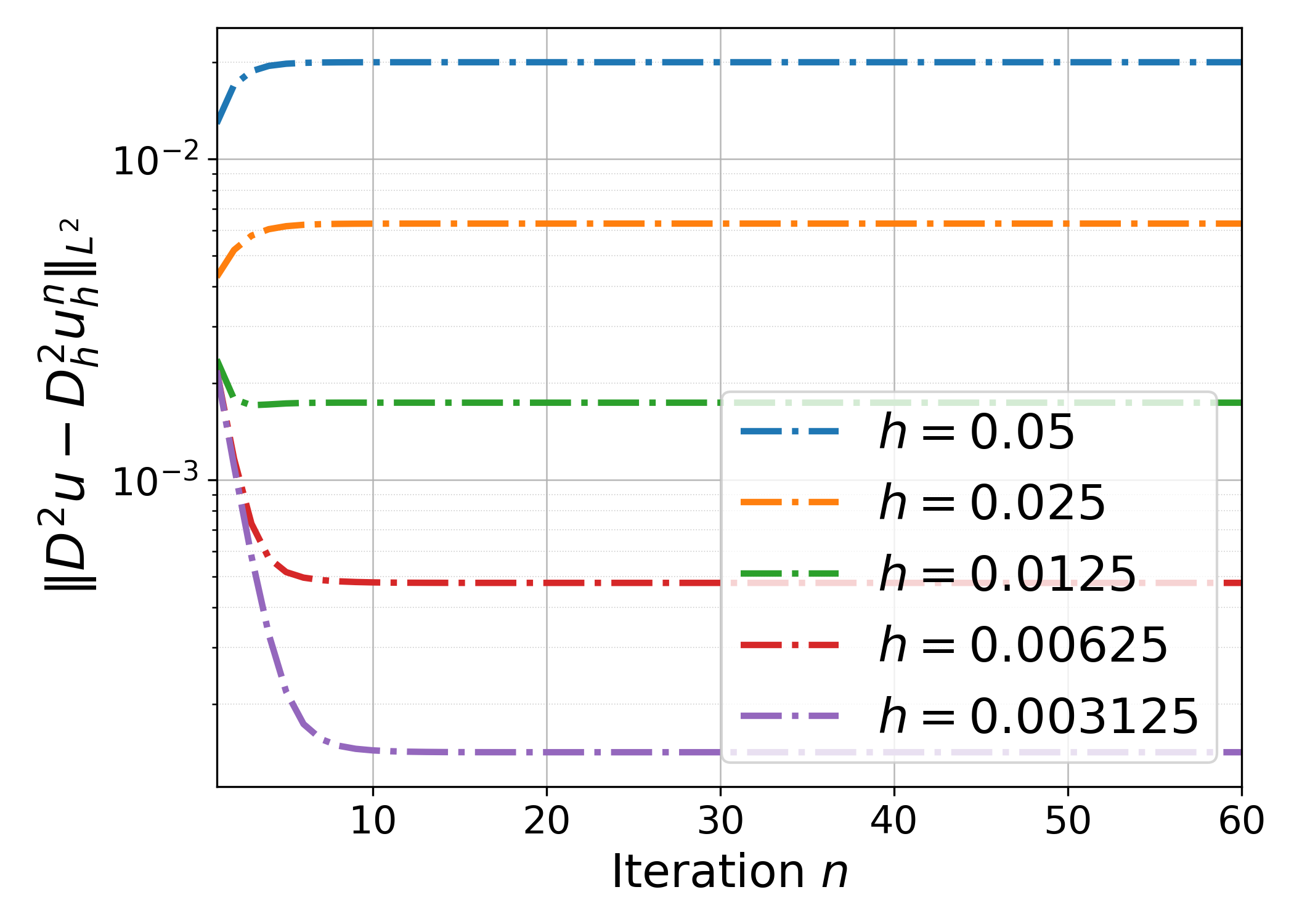}
\caption{$H^2$ error vs. iteration.}
\label{fig:small2}
\end{subfigure}
\begin{subfigure}[t]{0.325\linewidth}
\centering
\includegraphics[width=0.98\linewidth]{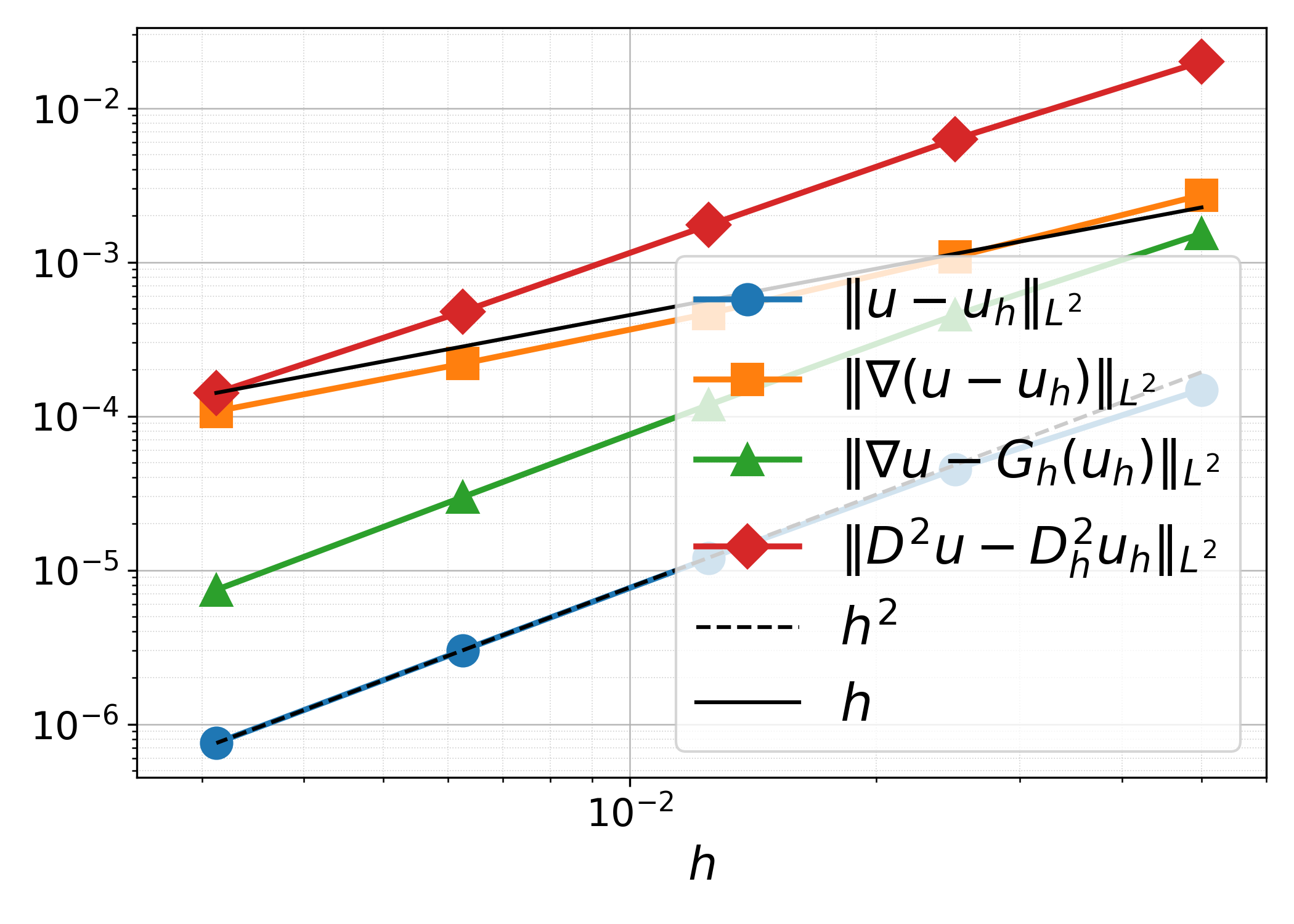}
\caption{Errors vs. $h$.}
\label{fig:small3}
\end{subfigure}
\caption{Results for $\eps = 0.002$. Left: $\|u-u^n_h\|_{L^2}$ vs. the splitting iteration $n$ for different mesh sizes. Center: $\|D^2u-D^2_hu^n_h\|_{L^2}$ vs. the splitting iteration $n$ for different mesh sizes. Right: $\|u-u_h\|$ in different norms vs. mesh size, where $u_h$ is the approximated solution when the splitting algorithm reaches convergence.}
\label{fig:small}
\end{figure}

\begin{figure}[tbp]
\centering
\begin{subfigure}[t]{0.325\linewidth}
\centering
\includegraphics[width=0.98\linewidth]{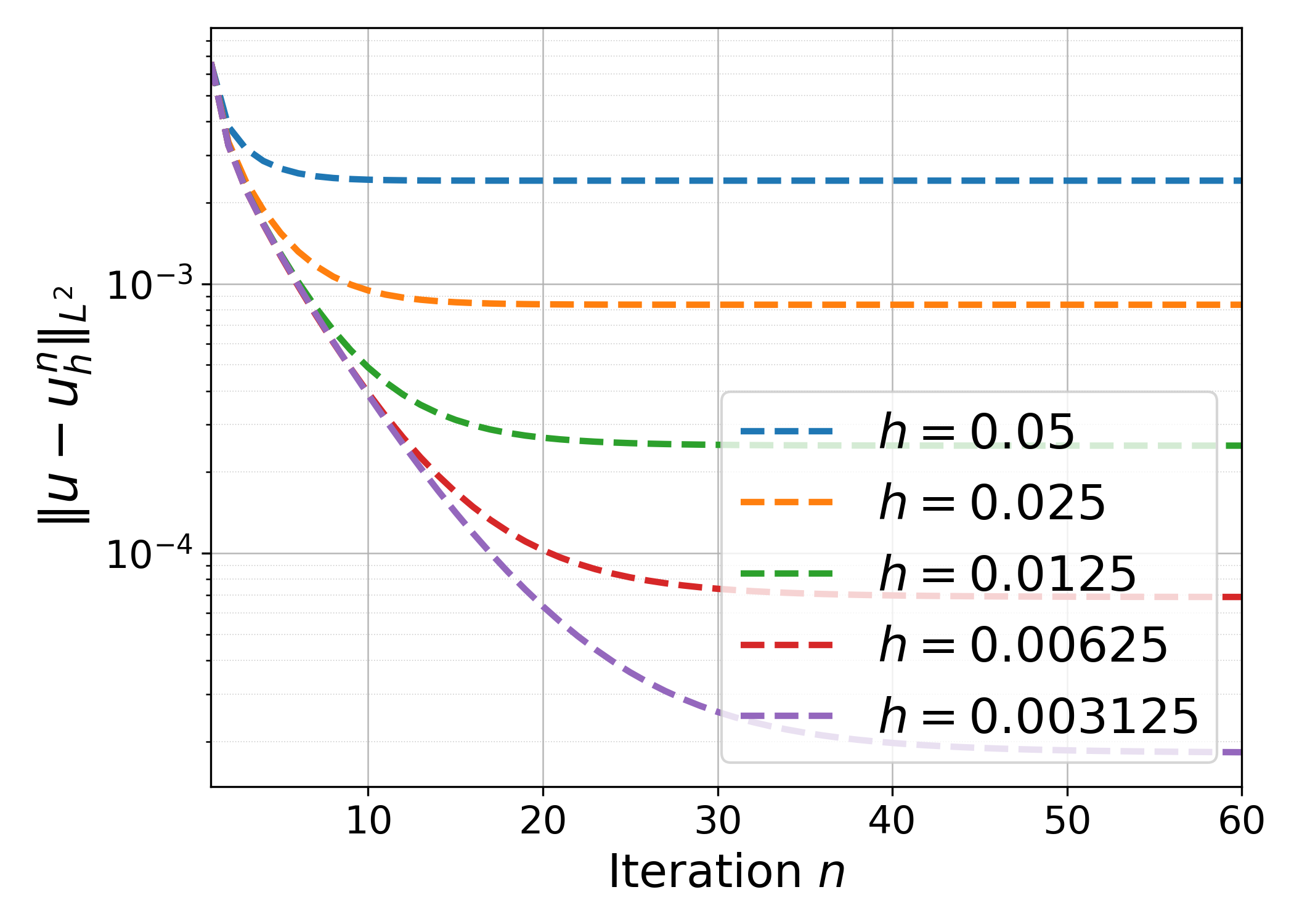}
\caption{$L^2$ error vs. iteration.}
\end{subfigure}
\begin{subfigure}[t]{0.325\linewidth}
\centering
\includegraphics[width=0.98\linewidth]{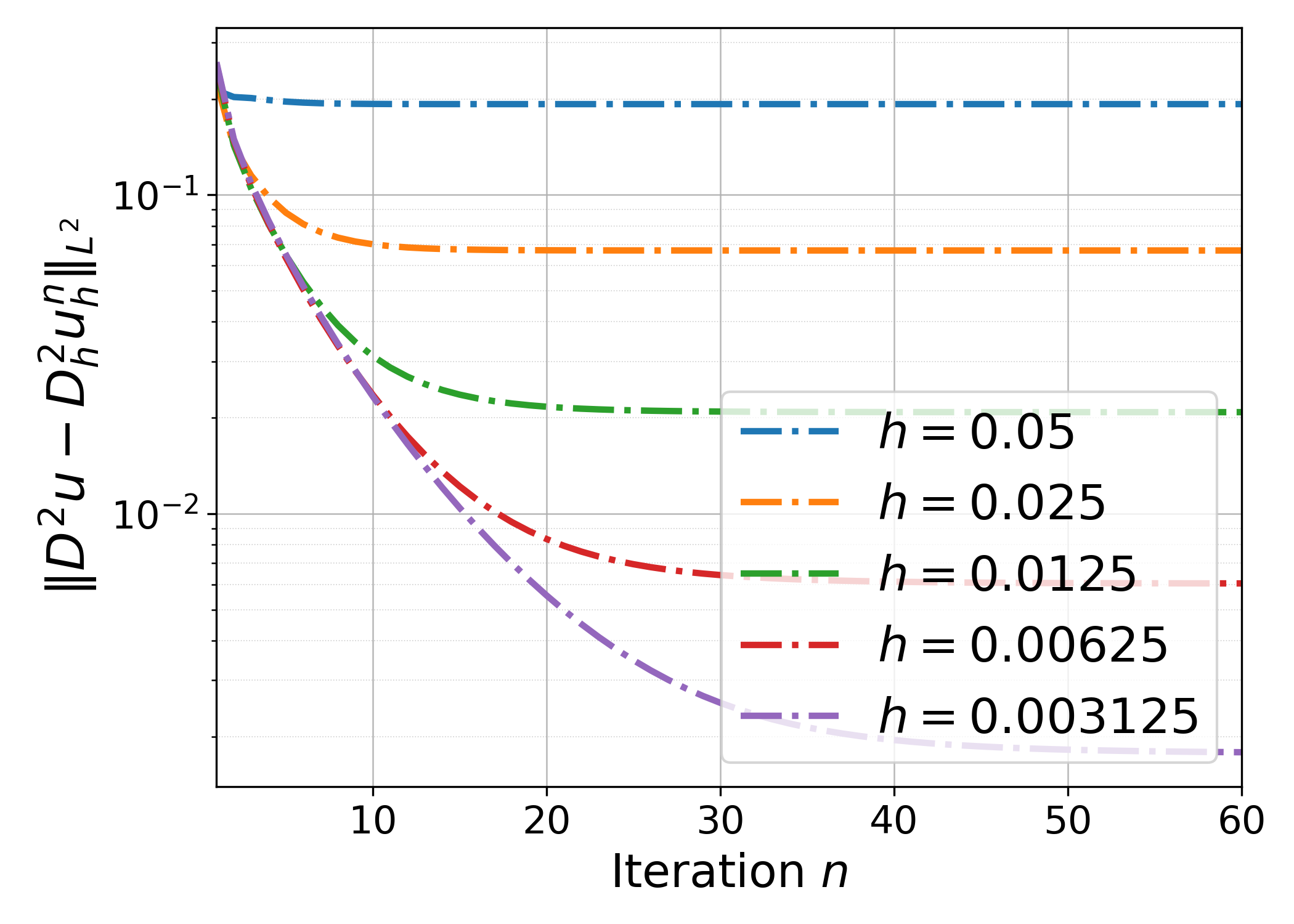}
\caption{$H^2$ error vs. iteration.}
\end{subfigure}
\begin{subfigure}[t]{0.325\linewidth}
\centering
\includegraphics[width=0.98\linewidth]{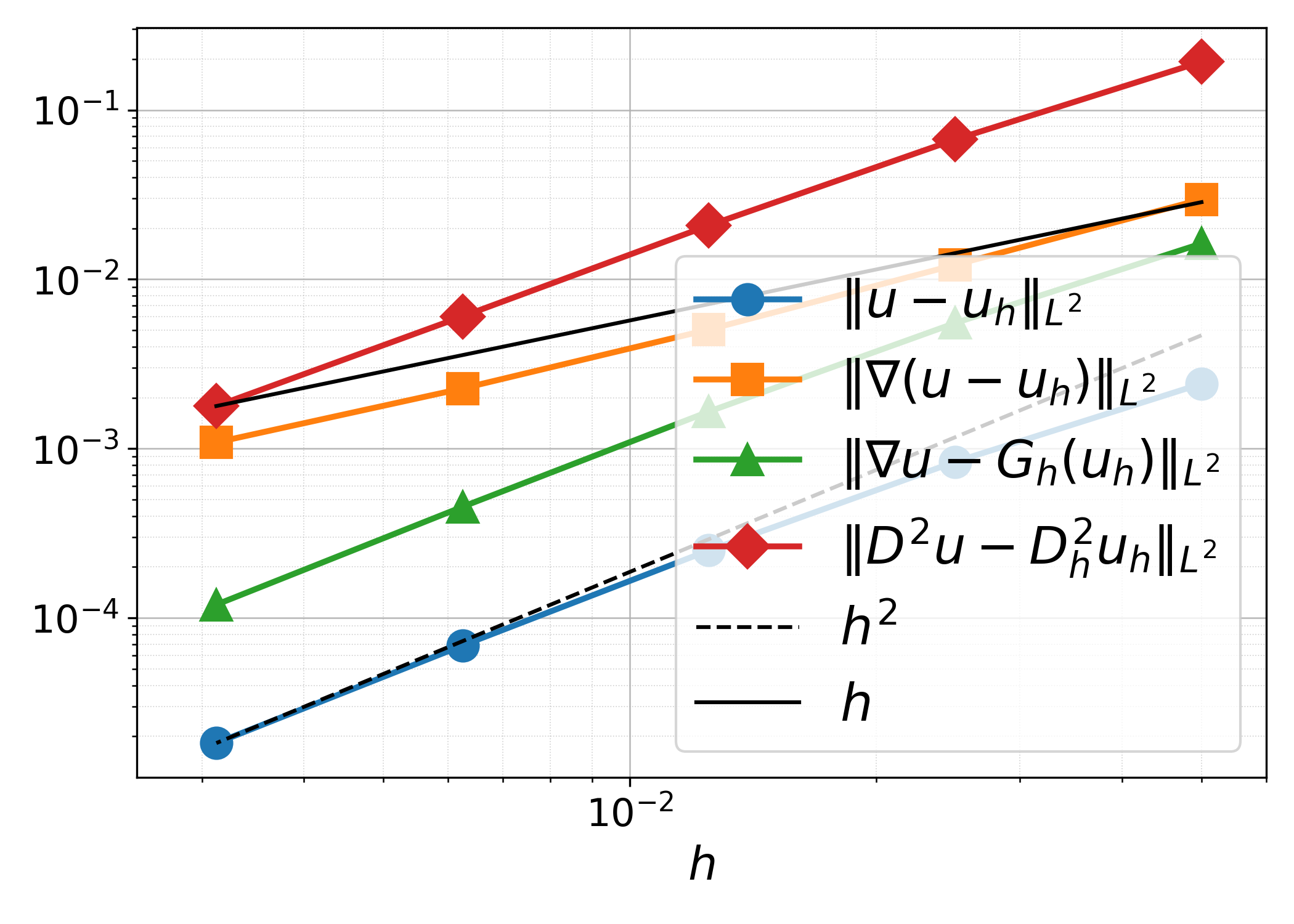}
\caption{Errors vs. $h$.}
\end{subfigure}
\caption{Results for $\eps = 0.02$. Left: $\|u-u^n_h\|_{L^2}$ vs. the splitting iteration $n$ for different mesh sizes. Center: $\|D^2u-D^2_hu^n_h\|_{L^2}$ vs. the splitting iteration $n$ for different mesh sizes. Right: $\|u-u_h\|$ in different norms vs. mesh size, where $u_h$ is the approximated solution when the splitting algorithm reaches convergence.}
\label{fig:big}
\end{figure}

\section{Conclusions}
We analyzed the nonlinear least-squares splitting method of \cite{dean,caboussat,peruso} for the Monge-Amp\`ere equation by recasting it as alternating projections in Sobolev spaces. This viewpoint yields a transparent fixed-point framework and highlights the specific difficulties of the original  formulation at low regularity.

Our main result proves local linear convergence of the splitting method on $\Omega=\mathbb{T}^2$. Under $f\in C^2(\mathbb{T}^2)$, a solution $u\in H^4(\mathbb{T}^2)$, and a uniform ellipticity condition on $\mathbf{I}+D^2u$, we show that sufficiently accurate initial data lead to geometric convergence to $u$ in $H^2$, providing the first rigorous convergence theory for the underlying splitting method for the Monge-Amp\`ere equation to the best of our knowledge. We also establish convergence for higher-regularity variants with $H^m$-projections, $m\ge2$, on bounded convex domains, which potentially apply to higher dimensions but is mainly of theoretical interest due to its computational cost. We believe that our strategy for the proof of the convergence may extend to other elliptic PDEs depending only on the eigenvalues of $D^2u$ and to other boundary conditions, under analogous regularity and ellipticity assumptions.

An interesting direction is to investigate what happens when the right-hand side of the Monge-Amp\`ere equation depends on the gradient, as arises in optimal transport and related geometric problems (e.g.\ Minkowski-type equations).

An open problem is to establish convergence of the splitting method for the Monge-Amp\`ere equation beyond the periodic setting, extending the analysis to general bounded convex domains, see \Cref{remark:projection} for more details.

\section*{Acknowledgments}
The authors are grateful to Alexandre Caboussat and Marco Picasso for  proposing the problem.
MS acknowledges support from the Chapman Fellowship at Imperial College London.

\bibliographystyle{unsrt}
\bibliography{biblio}

\end{document}